\newtheorem{prop}{Proposition}[section]
\newtheorem{thm}[prop]{Theorem}
\newtheorem{corol}[prop]{Corollary}
\newtheorem{conj}[prop]{Conjecture}
\newtheorem{lem}[prop]{Lemma}
\theoremstyle{definition}
\newtheorem{ex}[prop]{Example}
\theoremstyle{remark}
\newtheorem{rem}[prop]{Remark}
\newtheorem{conv}[prop]{Convention}
\def\ps@pprintTitle{%
     \let\@oddhead\@empty
     \let\@evenhead\@empty
     \def\@oddfoot{\footnotesize\itshape Linear Algebra and its Applications 569 (2019) 241--265\hfill}
     \let\@evenfoot\@oddfoot}
\begin{document}

\begin{frontmatter}



\title{A generalization of circulant Hadamard and conference matrices}


\author[UJF,OU,KUT]{Ond\v{r}ej Turek} \ead{ondrej.turek@osu.cz}

\address[UJF]{Nuclear Physics Institute, Czech Academy of Sciences, 250 68 \v{R}e\v{z}, Czech Republic}
\address[OU]{Department of Mathematics, Faculty of Science, University of Ostrava, 30.\ dubna 22, 701 03 Ostrava, Czech Republic}
\address[KUT]{Laboratory for Unified Quantum Devices, Kochi University of Technology, Kochi 782-8502, Japan}

\author[UdA]{Dardo Goyeneche} \ead{dardo.goyeneche@uantof.cl}

\address[UdA]{Departamento de F\'{i}sica, Facultad de Ciencias B\'{a}sicas, Universidad de Antofagasta, Casilla 170, Antofagasta, Chile}

\begin{abstract}
We study the existence and construction of circulant matrices $C$ of order $n\geq2$ with diagonal entries $d\geq0$, off-diagonal entries $\pm1$ and mutually orthogonal rows. These matrices generalize circulant conference ($d=0$) and circulant Hadamard ($d=1$) matrices. We demonstrate that matrices $C$ exist for every order $n$ and for $d$ chosen such that $n=2d+2$, and we find all solutions $C$ with this property. Furthermore, we prove that if $C$ is symmetric, or $n-1$ is prime, or $d$ is not an odd integer, then necessarily $n=2d+2$. Finally, we conjecture that the relation $n=2d+2$ holds for every matrix $C$, which generalizes the circulant Hadamard conjecture. We support the proposed conjecture by computing all the existing solutions up to $n=50$.

\end{abstract}

\begin{keyword}
circulant matrix \sep orthogonal matrix \sep circulant Hadamard conjecture \sep conference matrix


\MSC[2010] 15B10 \sep 15B36 \sep 05B20

\end{keyword}

\end{frontmatter}

\section{Introduction}

A \emph{circulant matrix} is a square matrix in which each row is obtained as a cyclic shift of the precedent row by one position to the right. That is, a circulant matrix of order $n$ takes the form
\begin{equation}\label{C}
C=\left(\begin{array}{ccccc}
c_{0} & c_{1} & \cdots & c_{n-2} & c_{n-1} \\
c_{n-1} & c_{0} & c_{1} &  & c_{n-2} \\
\vdots & c_{n-1} & c_{0} & \ddots & \vdots \\
c_{2} &  & \ddots & \ddots & c_{1} \\
c_{1} & c_{2} & \cdots & c_{n-1} & c_{0}
\end{array}\right)\,.
\end{equation}
A circulant matrix is fully specified by its first row, $(c_0,c_1,\ldots,c_{n-1})$, which we call the \emph{generator} of $C$.

Let us consider two special types of real circulant matrices, namely
\begin{itemize}
\item
\emph{circulant Hadamard matrices}, defined by conditions $c_j\in\{1,-1\}$ for $j=0,1,\ldots,n-1$ and $CC^T=nI$ (the superscript $T$ denotes transposition);
\item
\emph{circulant conference matrices}, defined by conditions $c_j\in\{1,-1\}$ for $j=1,\ldots,n-1$, $c_0=0$ and $CC^T=(n-1)I$.
\end{itemize}
The \emph{circulant Hadamard conjecture} says that circulant Hadamard matrices exist only for $n=1$ and $n=4$. The conjecture is open already for over half a century: according to Schmidt~\cite{Sch1}, ``the conjecture was first mentioned in Ryser's book~\cite{Ry} (1963), but goes back further to obscure sources''. Turyn~\cite{Tu} proved in 1965 that $n$ can only take values $4u^2$ for an odd $u$ and derived further necessary conditions on $n$. Schmidt~\cite{Sch1,Sch2} showed that the circulant Hadamard conjecture is true for orders up to $n=10^{11}$ with three possible exceptions. On top of these results, it is known that a circulant Hadamard matrix cannot be symmetric for $n>4$ (Johnsen~\cite{Jo}, Brualdi and Newman~\cite{Br}, McKay and Wang~\cite{MKW}, Craigen and Kharaghani~\cite{CK}).

By contrast, the problem of existence of circulant conference matrices is fully solved. Stanton and Mullin~\cite{SM} demonstrated that circulant conference matrices only exist of order $n=2$; later Craigen~\cite{Cr} proposed a simpler proof of this fact.

The two kinds of matrices described above serve as a main motivation for our paper. We are concerned with their common generalization, in which we allow the diagonal entries of the matrix $C$ to take an arbitrary value $d\in\mathbb{R}$. For the sake of convenience, we assume $d\geq0$ without loss of generality, and we exclude the trivial case $n=1$. The aim of our work is thus to study matrices $C$ defined by the following conditions:
\begin{equation}\label{Conditions}
\left\{
\begin{array}{l}
\text{$C$ is a circulant matrix of order $n\geq2$ with generator $(c_0,c_1,\ldots,c_{n-1})$}\,; \\
c_j\in\{1,-1\} \quad\text{for all}\; j=1,\ldots,n-1 \,; \\
c_0=d\geq 0\,; \\
CC^T=(d^2+n-1)I\,.
\end{array}
\right.
\end{equation}
Matrices $C$ for $d=1$ and $d=0$ correspond to circulant Hadamard matrices and circulant conference matrices, respectively. In this paper we find all matrices satisfying~\eqref{Conditions} for any value $d\geq0$ that is not an odd integer. The case of $d$ being odd involves the circulant Hadamard conjecture and is thus much harder; for that case we conjecture that all matrices obeying conditions~\eqref{Conditions} satisfy the relation $n=2d+2$. We verify the conjecture up to $n=50$.

There exists another generalization of circulant Hadamard and conference matrices called circulant weighing matrices. A weighing matrix $W$ of order $n$ and weight $k$ is an $n\times n$ matrix having entries from the set $\{0,1,-1\}$ such that $WW^T=kI$. Circulant weighing matrices and their classification were studied by several authors, see works of Eades and Hain~\cite{EH}, Arasu et al.~\cite{AS,ALMNR}, Ang et al.~\cite{AAMS}.

To the best of our knowledge, matrices obeying \eqref{Conditions} for a general $d$ have not been studied before. However, similar parametric matrix problems without the circulancy assumption were already considered. Seberry and Lam~\cite{SL} examined symmetric matrices with orthogonal rows having a constant $m$ on the diagonal and $\pm1$ off the diagonal, and Lam~\cite{La} later extended the study to the non-symmetric case. Recently, Hermitian unitary $n\times n$ matrices with $\pm d/\sqrt{d^2+n-1}$ on the diagonal and complex numbers of modulus $1/\sqrt{d^2+n-1}$ off the diagonal were studied in mathematical physics in relation to scattering in quantum graph vertices (Turek and Cheon \cite{TC}, Kurasov and Ogik \cite{KO}). Also, elements of a special class of real symmetric matrices having constant diagonal $d$, off-diagonal entries $\pm1$ and orthogonal rows are one-to-one related to real equiangular tight frames \cite{GT}.

Our matrices are also closely related to Barker sequences. A Barker sequence is a finite sequence of $n$ numbers $\{c_k\}$ with $c_k\in\{-1,1\}$ and $0\leq k \leq n-1$ which satisfies $|\sum_{k=0}^{n-m-1}c_kc_{k+m}|\leq1$ for every $0\leq m\leq n-1$. It has been proven that only eight Barker sequences exist for length $n\leq13$ \cite{BM08}, if we assume $c_0=c_1=1$ without loss of generality. Furthermore, the existence of a Barker sequence of length $n>13$ would imply that a circulant Hadamard matrix of size $n$ exists (see \cite[Chapter VI, \S14]{BJL99}). This means that Barker sequences of length $n>13$ necessarily imply perfect autocorrelation for the sequence. We say that an autocorrelation is perfect if $\sum_{k=0}^{n-1}c_kc_{k+m \bmod n}=0$ for every $1\leq m\leq n-1$. Sequences with low autocorrelation have a fundamental importance in radar signals theory \cite{CB67}, data transmission and data compression \cite{H52}.
It is thus interesting to search for new finite sequences having perfect autocorrelation, in a similar way as Huffman generalized Barker sequences  \cite{H62}. With this aim, in the present work we define sequences having the first element $c_0\geq0$ different from one in general, i.e., the sequence $\{c_k\}$ does not have all its elements with constant amplitude. This perturbation in the amplitude of the signal allows us to find interesting novel results for sequences of any length $n$. From the point of view of correlations of finite sequences the main result of our paper can be stated as follows: We find the complete set of sequences $\{c_k\}$ of length $n$ with $c_0=n/2-1$, $c_k\in\{-1,1\}$ for $1\leq k\leq n-1$ and having perfect autocorrelation. These sequences exist for every $n\geq2$. Furthermore, we conjecture that every finite sequence of length $n$, with $c_0\geq0$, $c_k\in\{-1,1\}$ for $1\leq k\leq n-1$ and having perfect autocorrelation satisfies $c_0=n/2-1$. If this conjecture is true, then Barker sequences of length $n>13$ do not exist.

The paper is organized as follows. In Section~\ref{Preliminaries} we review basic properties of matrices $C$ satisfying conditions~\eqref{Conditions}. In particular, we prove that a matrix $C$ of order $n$ with diagonal entries $d$ exists only if $n\geq2d+2$. In Section~\ref{Section d<} we derive further necessary conditions and bring in additional results obtained by a computer calculation. On the basis of our findings, we formulate a conjecture that extends the circulant Hadamard conjecture. In Section~\ref{Section: symmetric} we prove that a symmetric matrix $C$ of order $n$ with diagonal entries $d$ exists if and only if $n=2d+2$. Finally, in Section~\ref{Section d=} we find all matrices $C$ that obey conditions~\eqref{Conditions} and have the property $n=2d+2$.

\section{Preliminaries}\label{Preliminaries}

Let $C$ be a circulant matrix of order $n$.
The vectors
$$
v_k=\frac{1}{\sqrt{n}}\left(1,\omega^k,\omega^{2k},\ldots,\omega^{(n-1)k}\right)^T\,,
$$
where $\omega=\mathrm{e}^{2\pi\mathrm{i}/n}$, are normalized eigenvectors of $C$ for all $k=0,1,\ldots,n-1$.
If the matrix $C$ has generator $(c_0,c_1,\ldots,c_{n-1})$, then the corresponding eigenvalues of $C$ are
\begin{equation}\label{eigenvalues}
\lambda_k=c_0+c_{1}\omega^k+c_{2}\omega^{2k}+\cdots+c_{n-1}\omega^{(n-1)k}\,.
\end{equation}
Since the vector $(\lambda_0,\lambda_1,\ldots,\lambda_{n-1})^T$ is obtained as the discrete Fourier transform (DFT) of $(c_0,c_1,\ldots,c_{n-1})^T$, the values $c_\ell$ can be expressed using the inverse DFT as follows:
\begin{equation}\label{invDFT}
c_j=\frac{1}{n}\left(\lambda_0+\lambda_1\omega^{-j}+\lambda_2\omega^{-2j}+\cdots+\lambda_{n-1}\omega^{-(n-1)j}\right)\,.
\end{equation}
From now on we focus on circulant matrices $C$ with generator $(c_0,c_1,\ldots,c_{n-1})$ satisfying 
conditions~\eqref{Conditions}.
For the sake of convenience, we will adopt the following convention.

\begin{conv}\label{Numbering}
The rows and columns of $C$ are indexed from $0$ to $n-1$, i.e., they will be referred to as $0$th, $1$st,\ldots, $(n-1)$th.
\end{conv}

\begin{prop}
If $C$ satisfies conditions~\eqref{Conditions}, then
\begin{equation}\label{sum same}
\left|\sum_{j=0}^{n-1}c_j\right|=\sqrt{d^2+n-1}\,.
\end{equation}
Moreover, if $n$ is even, then
\begin{equation}\label{sum alternating}
\left|\sum_{j=0}^{n-1}(-1)^j c_j\right|=\sqrt{d^2+n-1}\,.
\end{equation}
\end{prop}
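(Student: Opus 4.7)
The plan is to exploit the diagonalization of circulant matrices by the discrete Fourier basis, which has already been recalled right before the statement. The key point is to convert the orthogonality condition $CC^T = (d^2+n-1)I$ into a statement about the moduli of the eigenvalues $\lambda_k$, and then simply evaluate at two specific values of $k$.

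First I would observe that $C^T$ is itself circulant, with generator $(c_0, c_{n-1}, c_{n-2}, \ldots, c_1)$. Plugging this generator into formula~\eqref{eigenvalues} and using that the $c_j$ are real, one sees that $C^T$ shares the eigenvectors $v_k$ with $C$ and that its eigenvalue associated with $v_k$ is $\overline{\lambda_k}$. Consequently $CC^T v_k = |\lambda_k|^2 v_k$, and the hypothesis $CC^T = (d^2+n-1)I$ forces
\begin{equation*}
|\lambda_k|^2 = d^2 + n - 1 \qquad \text{for all } k = 0,1,\ldots,n-1.
\end{equation*}

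To finish, I would specialize to the two indices $k$ at which the eigenvalue expression~\eqref{eigenvalues} reduces to a real sum of the $c_j$. Taking $k=0$ gives $\omega^{jk}=1$ for every $j$, so $\lambda_0 = \sum_{j=0}^{n-1} c_j$, which directly yields~\eqref{sum same}. When $n$ is even, taking $k=n/2$ gives $\omega^{n/2} = -1$, hence $\omega^{jk} = (-1)^j$ and $\lambda_{n/2} = \sum_{j=0}^{n-1}(-1)^j c_j$, which delivers~\eqref{sum alternating}.

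There is no real obstacle here: the only thing one has to be careful about is the identification of the eigenvalues of $C^T$ (a brief index calculation $c_{-j \bmod n}$ and a relabeling), after which the two identities follow by evaluating $|\lambda_k|^2 = d^2+n-1$ at $k=0$ and, in the even case, at $k=n/2$. No further case analysis or auxiliary lemma is required.
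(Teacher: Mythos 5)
Your proposal is correct and follows essentially the same route as the paper: the paper likewise deduces $|\lambda_k|=\sqrt{d^2+n-1}$ for all $k$ from $CC^T=(d^2+n-1)I$ and then specializes to $k=0$ and $k=n/2$. Your extra detail on identifying the eigenvalues of $C^T$ as $\overline{\lambda_k}$ simply makes explicit a step the paper leaves implicit.
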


\begin{proof}
The assumption $CC^T=(d^2+n-1)I$ implies that the eigenvalues $\lambda_k$ of $C$, given by equation~\eqref{eigenvalues}, obey $|\lambda_k|=\sqrt{d^2+n-1}$ for all $k=0,1,\ldots,n-1$. In the special case $k=0$ we obtain equation~\eqref{sum same}. If $n$ is even, then $k=\frac{n}{2}$ leads to equation~\eqref{sum alternating}.
\end{proof}

\section{Relations between the order $n$ and the diagonal $d$}\label{Section d<}

In this section we derive restrictions on the pair $(n,d)$ for matrices satisfying~\eqref{Conditions}.
The symbol $\mathbb{N}_0$ used in the text denotes the set of non-negative integers.

\begin{prop}\label{Prop1.d}
If a matrix $C$ satisfies \eqref{Conditions}, then $2d$ is an integer. Moreover:
\begin{itemize}
\item[(i)] If $d$ is a half-integer, then $n=2d+2$.
\item[(ii)] If $d$ is an integer, then
\begin{equation}\label{d in k}
n=k(2d+k)+1
\end{equation}
for some odd $k\in\mathbb{N}$. In particular, $n$ is even, and $d$ is odd if and only if $\frac{n}{2}$ is even.
\end{itemize}
\end{prop}

\begin{proof}
For any $r\in\{1,\ldots,n-1\}$, the scalar product of the $0$th and $r$th row of $C$ must be zero; hence
\begin{equation}\label{orthog n k}
d(c_r+c_{n-r})=-\sum_{\substack{j=1\\j\neq n-r}}^{n-1}c_jc_{(j+r) \bmod n}\,.
\end{equation}
Since $c_j\in\{1,-1\}$ for all $j=1,\ldots,\frac{n}{2}-1$, the left hand side of \eqref{orthog n k} satisfies $d(c_r+c_{n-r})\in\{-2d,0,2d\}$, while the right hand side of \eqref{orthog n k} is always an integer of the same parity as $n$.

If $2d\notin\mathbb{N}_0$, the left hand side of $\eqref{orthog n k}$ is an integer only when being equal to $0$. Thus the right hand side must be $0$, too; hence $n$ is even. Then, however, \eqref{orthog n k} cannot be satisfied for $r=\frac{n}{2}$, because the left hand side is $2dc_{\frac{n}{2}}\notin\mathbb{Z}$. Consequently, $2d$ is an integer.

(i)\; Let $d$ be a half-integer, i.e., $2d$ is odd. Then $n$ is odd, otherwise \eqref{orthog n k} would be violated for $r=\frac{n}{2}$.
As a result, the left hand side of \eqref{orthog n k} must be odd for all $r=1,\ldots,n-1$, i.e., $c_r+c_{n-r}\neq0$ for all $r$. Considering that $c_j\in\{1,-1\}$, we conclude that $C$ is symmetric.

Let us take an arbitrary $r\in\{1,\ldots,n-1\}$ and denote $c_r=c_{n-r}=\gamma$. We write down the $0$th and $r$th row of $C$ and rearrange the columns in the following way:
\begin{equation*}
\begin{array}{ccccccl}
d & \gamma & +1 \cdots +1 & +1 \cdots +1 & -1 \cdots -1 & -1 \cdots -1 \\
\gamma & d & \underbrace{+1 \cdots +1}_{\ell_1} & \underbrace{-1 \cdots -1}_{\ell_2} & \underbrace{+1 \cdots +1}_{\ell_3} & \underbrace{-1 \cdots -1}_{\ell_4} & .
\end{array}
\end{equation*}
We have
\begin{equation}\label{odd L1}
\ell_1+\ell_2+\ell_3+\ell_4=n-2\,.
\end{equation}
Since $C$ is circulant, every row of $C$ has the same sum of elements, i.e.,
\begin{equation}\label{odd L3,4}
d+\gamma+\ell_1+\ell_2-\ell_3-\ell_4=\gamma+d+\ell_1-\ell_2+\ell_3-\ell_4\,.
\end{equation}
Since $C$ is orthogonal, the scalar product of the $0$th and the $r$th row must be $0$; hence
\begin{equation}\label{odd L2}
2\gamma d+\ell_1-\ell_2-\ell_3+\ell_4=0\,.
\end{equation}
The system of equations~\eqref{odd L1}--\eqref{odd L2} implies
$$
4\ell_2=n-2+2\gamma d\,.
$$
Consequently,
\begin{equation}\label{n odd mod}
n-2+2\gamma d\equiv0\pmod4\,.
\end{equation}

Obviously there is an $r$ such that $c_r=-1$; otherwise the rows of $C$ would not be orthogonal. We already know that $C$ is symmetric, hence $c_{n-r}=c_r=-1$. Setting $\gamma=-1$ in equation~\eqref{n odd mod}, we get
\begin{equation}\label{odd n minus}
n-2-2d\equiv0\pmod4\,.
\end{equation}
If there was also a $r'$ such that $c_{r'}=c_{n-r'}=+1$, then, with regard to \eqref{n odd mod}, we would have one more equation, namely,
$$
n-2+2d\equiv0\pmod4\,.
$$
This equation together with \eqref{odd n minus} implies $2n-4\equiv0\pmod4$, which is in contradiction with the fact that $n$ is odd. We conclude that $c_j=-1$ for all $j=1,\ldots,n-1$, i.e., the generator of $C$ is $(d,-1,-1,\ldots,-1)$. Equation~\eqref{orthog n k} then takes the form $-2d=n-2$; hence $n=2d+2$.

(ii)\; If $d$ is an integer, then $d(c_k+c_{n-k})\in\{-2d,0,2d\}$ is even; hence $n$ is even by \eqref{orthog n k}.
From equation~\eqref{sum same} we have
\begin{equation}\label{perf.square}
|d+c_1+\cdots+c_{n-1}|=\sqrt{d^2+n-1}\,.
\end{equation}
Since $c_j\in\{1,-1\}$ for all $j=1,\ldots,n-1$, the left hand side of \eqref{perf.square} is an integer. Therefore, there exists a $k\in\mathbb{Z}$ such that $|d+c_1+\cdots+c_{n-1}|=d+k$. Considering the right hand side of \eqref{perf.square}, $k$ is positive. So we have $d+k=\sqrt{d^2+n-1}$ for some $k\in\mathbb{N}$ (recall that $n>1$ by \eqref{Conditions}). Hence we obtain \eqref{d in k}. Since $n$ is even, equation \eqref{d in k} implies that $k$ must be odd. Finally, $d$ is odd if and only if $\frac{n}{2}$ is even, because $\frac{n}{2}-d=(k-1)d+\frac{k^2+1}{2}$ is obviously odd for every odd $k$.
\end{proof}

\begin{rem}\label{max d exists}
A matrix $C$ obeying conditions~\eqref{Conditions} exists for every $d\geq0$ such that $2d$ is an integer. For example, consider the generator $(d,-1,-1,\ldots,-1)\in\mathbb{R}^n$ for $n=2d+2$.
In particular, if $d$ is a half-integer, it immediately follows from the proof of Proposition~\ref{Prop1.d}(i) that $(d,-1,-1,\ldots,-1)\in\mathbb{R}^{2d+2}$ is the only possible generator of $C$.
\end{rem}

\begin{corol}\label{Coro.n}
If a matrix $C$ of order $n$ satisfies \eqref{Conditions} and $n-1$ is prime, then $d=\frac{n}{2}-1$.
\end{corol}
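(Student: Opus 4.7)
The plan is to reduce the corollary to a simple factorization argument by combining Proposition~\ref{Corol.d} (which handles the non-integer case of $d$) with Proposition~\ref{Prop.n,d} (which provides a multiplicative identity relating $n$ and $d$ when $d$ is an integer). Since $2d$ must be an integer by Proposition~\ref{Corol.d}, there are only two cases to consider: $d$ is a half-integer, or $d$ is an integer.

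First I would dispose of the half-integer case. If $d$ is a half-integer, Proposition~\ref{Corol.d} immediately yields $n=2(d+1)$, equivalently $d=\frac{n}{2}-1$, and the claim holds regardless of whether $n-1$ is prime. So assume $d\in\mathbb{N}_0$; this forces $n$ even by Proposition~\ref{Corol.d}, which is consistent with $n-1$ being prime (and in general odd).

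Next, I would apply Proposition~\ref{Prop.n,d}, which guarantees the existence of $k\in\mathbb{N}$ with $n-1=k(2d+k)$. Here we have a factorization of the prime $n-1$ into two positive integer factors $k$ and $2d+k$ (both positive because $k\geq1$ and $d\geq0$). Primality forces one of the two factors to equal $1$. If $k=1$, then $2d+1=n-1$, which gives $d=\frac{n}{2}-1$ as required. The only alternative is $2d+k=1$, which combined with $k\geq1$ and $d\geq0$ integers forces $k=1$ and $d=0$, hence $n-1=1$, i.e., $n=2$; but then $d=0=\frac{n}{2}-1$ still holds. In every case $d=\frac{n}{2}-1$.

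There is no real obstacle here: the argument is entirely a matter of combining the two earlier propositions with the elementary observation that a prime admits essentially only the trivial factorization. The only thing one must check carefully is that the factors $k$ and $2d+k$ in the identity $n-1=k(2d+k)$ are indeed positive integers, so that primality of $n-1$ can be invoked without ambiguity.
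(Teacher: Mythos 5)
Your proposal is correct and follows essentially the same route as the paper: dispose of the non-integer case of $d$ via Proposition~\ref{Corol.d} (the paper equivalently treats $n-1=2$ via Proposition~\ref{Prop.d}(ii)), then apply Proposition~\ref{Prop.n,d} and use primality of $n-1=k(2d+k)$ to force $k=1$. The only cosmetic difference is that the paper splits on the parity of $n$ while you split on whether $d$ is a half-integer, which amounts to the same thing.
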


\begin{proof}
Proposition~\ref{Prop1.d} implies that $d$ is integer if and only if $n$ is even.
If $n-1=2$, then $n=3$ is odd, thus $d$ is a half-integer, and $n=2d+2$ due to Proposition~\ref{Prop1.d}.
If $n-1$ is an odd prime, then $n$ is even, hence $d\in\mathbb{N}_0$. Then equation~\eqref{d in k} gives $d=\frac{1}{2}\left(\frac{n-1}{k}-k\right)$ for some $k$ that divides $n-1$. Since $n-1$ is prime, we have $k=1$ (the other possibility, $k=n-1$, forces $d<0$, which is ruled out by~\eqref{Conditions}); hence $d=\frac{1}{2}\left(\frac{n-1}{1}-1\right)=\frac{n}{2}-1$.
\end{proof}
Note that the statement of Corollary~\ref{Coro.n} can be extended. One can show in a similar way that if $n-1$ is the square of a prime, then either $d=\frac{n}{2}-1$ or $d=0$, and if $n-1$ is the product of two twin primes, then either $d=\frac{n}{2}-1$ or $d=1$.

\begin{prop}\label{Prop.even}
If $C$ satisfies \eqref{Conditions} and $d$ is even, then
\begin{itemize}
\item[(i)] $C$ is symmetric;
\item[(ii)] $d\equiv\frac{n}{2}-1 \pmod 4$;
\item[(iii)] the entries of the generator obey equation $\left(\sum_{j=1}^{\frac{n}{2}}c_{2j-1}\right)^2=d^2+n-1$.
\end{itemize}
\end{prop}

\begin{proof}
If $d$ is even, Proposition~\ref{Prop1.d} implies $n\equiv2 \pmod 4$.

(i)\; We prove the statement by contradiction. Assume that $C$ is not symmetric. Then there exists a $j\in\{1,\ldots,n-1\}$ such that $c_j=1$ and $c_{n-j}=-1$. We write down the $0$th row and the $j$th row of $C$ and rearrange the columns as follows.
\begin{equation*}
\begin{array}{ccccccl}
d & +1 & +1 \cdots +1 & +1 \cdots +1 & -1 \cdots -1 & -1 \cdots -1 \\
-1 & d & \underbrace{+1 \cdots +1}_{\ell_1} & \underbrace{-1 \cdots -1}_{\ell_2} & \underbrace{+1 \cdots +1}_{\ell_3} & \underbrace{-1 \cdots -1}_{\ell_4} & .
\end{array}
\end{equation*}
Similarly as in the proof of Proposition~\ref{Prop1.d}(i), we use the properties of $C$ (order $n$, circulancy and orthogonality of rows) to obtain the conditions
\begin{equation}\label{L1}
\ell_1+\ell_2+\ell_3+\ell_4=n-2\,;
\end{equation}
\begin{equation}\label{L3,4}
d+1+\ell_1+\ell_2-\ell_3-\ell_4=-1+d+\ell_1-\ell_2+\ell_3-\ell_4\,;
\end{equation}
\begin{equation}\label{L2}
\ell_1-\ell_2-\ell_3+\ell_4=0\,.
\end{equation}
Solving the system of equations~\eqref{L1}--\eqref{L2}, we get in particular
$$
\ell_2=\frac{n}{4}-1\,.
$$
Consequently, $n$ is a multiple of $4$, which contradicts the above-mentioned relation $n\equiv2 \pmod 4$.

(ii)\; The symmetry of $C$ implies that the $0$th and the $\frac{n}{2}$th row of $C$ read
$$
\begin{array}{ccccccccccl}
d & c_{1} & c_{2} & \cdots & c_{\frac{n}{2}-1} & c_{\frac{n}{2}} & c_{\frac{n}{2}-1} & \cdots & c_2 & c_1 \\
c_{\frac{n}{2}} & c_{\frac{n}{2}-1} & c_{\frac{n}{2}-2} & \cdots & c_{1} & d & c_{1} & \cdots & c_{\frac{n}{2}-2} & c_{\frac{n}{2}-1} & .
\end{array}
$$
The two rows are orthogonal, i.e.,
$$
2d c_{\frac{n}{2}}+2\sum_{j=1}^{\frac{n}{2}-1}c_j c_{\frac{n}{2}-j}=0\,.
$$
Since $|c_{\frac{n}{2}}|=1$ and $d\geq 0$, we get
\begin{equation}\label{d 2 mod 4}
d=\left|\sum_{j=1}^{\frac{n}{2}-1}c_j c_{\frac{n}{2}-j}\right|\,.
\end{equation}
The relation $n\equiv2 \pmod 4$ implies
$$
\sum_{j=1}^{\frac{n}{2}-1}c_j c_{\frac{n}{2}-j}=2\sum_{j=1}^{\frac{n-2}{4}}c_j c_{\frac{n}{2}-j}\,,
$$
which allows us to rewrite equation~\eqref{d 2 mod 4} in the form
\begin{equation}\label{congr}
\frac{d}{2}=\left|\sum_{j=1}^{\frac{n-2}{4}}c_j c_{\frac{n}{2}-j}\right|\,.
\end{equation}
Since the sum on the right hand side of \eqref{congr} has the same parity as the number $\frac{n-2}{4}$, we have $\frac{d}{2}\equiv\frac{n-2}{4} \pmod 2$. And so $d\equiv\frac{n}{2}-1 \pmod 4$.

(iii)\; Let us denote $a=\sum_{j=0}^{\frac{n}{2}-1}c_{2j}$, $b=\sum_{j=1}^{\frac{n}{2}}c_{2j-1}$.
Equations~\eqref{sum same} and \eqref{sum alternating} give
$$
(a+b)^2=d^2+n-1\,, \qquad (a-b)^2=d^2+n-1\,.
$$
Thus $(a+b)^2=(a-b)^2$, which implies $ab=0$.
Since $\frac{n}{2}$ is odd (Proposition~\ref{Prop1.d}) and $b$ consists of $\frac{n}{2}$ terms $\pm1$, $b\neq0$. Hence $a=0$.
\end{proof}

Now we are ready to solve the case when $d$ is an even integer. Theorem~\ref{Thm: d even} below generalizes a theorem of Stanton and Mullin~\cite{SM} which says that a circulant conference matrix exists only for $n=2$. The idea of the proof is based on~\cite{SM}.

\begin{thm}\label{Thm: d even}
If a matrix $C$ satisfies conditions~\eqref{Conditions} and $d$ is an even integer, then $n=2d+2$.
\end{thm}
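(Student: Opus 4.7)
Proof plan. Since $d$ is an even integer, Proposition~\ref{Prop.d}(i) yields $n$ even with $d\equiv \frac{n}{2}-1\pmod 2$, forcing $n\equiv 2\pmod 4$. In this regime Propositions~\ref{Prop.sym}, \ref{Prop. d equiv}, and \ref{Prop. sum odd} all apply: $C$ is symmetric, and setting $m:=\sqrt{d^2+n-1}$ (an integer by Prop.~\ref{Prop. sum odd}), one has $\sum_{j=0}^{n/2-1} c_{2j}=0$ together with $\sum_{j=1}^{n/2} c_{2j-1}=\pm m$. Corollary~\ref{Corol.d} already gives $n\geq 2(d+1)$, so only the reverse inequality must be ruled out. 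By Proposition~\ref{Prop.n,d} we may write $m=d+k$ and $n-1=k(2d+k)$ for some positive integer $k$; the goal becomes $k=1$.

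Following the Stanton--Mullin philosophy, I decompose $C$ by index parity. Writing $n'=n/2$ and reordering the rows and columns so that the even indices come first, $C$ takes the block form $\bigl(\begin{smallmatrix} A & B \\ B^T & A \end{smallmatrix}\bigr)$, where $A$ is the symmetric $n'\times n'$ circulant generated by $(d,c_2,c_4,\dots,c_{n-2})$ and $B$ is the $n'\times n'$ circulant generated by $(c_1,c_3,\dots,c_{n-1})$. Because $A$ and $B$ commute (being circulants of the same size), the identity $CC^T=m^2I$ splits block-wise into $AB=0$ and $A^2+BB^T=m^2I$. Passing to the common Fourier basis yields $\alpha_k\beta_k=0$ and $\alpha_k^2+|\beta_k|^2=m^2$ at each frequency $k$; combined with $A$ being symmetric (hence $\alpha_k\in\mathbb{R}$), every $\alpha_k$ lies in $\{-m,0,m\}$ while $|\beta_k|\in\{0,m\}$ takes the complementary value.

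Proposition~\ref{Prop. sum odd} further implies that the row sum of $A$ is $0$, so $\alpha_0=0$. Summing $|\beta_k|^2$ over $k$ gives $\operatorname{tr}(BB^T)=(n')^2$, hence the multiplicity of $0$ as an eigenvalue of $A$ equals $(n')^2/m^2$. In particular $m\mid n'$, and substituting $m=d+k$ and $n'=(k(2d+k)+1)/2$ reduces this to the necessary condition $(d+k)\mid (k^2-1)/2$. Since $d$ is even and $k$ must be odd, $d+k$ is odd; together with $d+k\geq k$ this already eliminates many small values of $k\geq 3$ (for instance $k=3$ forces $d+3$ to be an odd divisor of $4$ with $d+3\geq 3$, impossible).

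The main obstacle is that the divisibility $(d+k)\mid (k^2-1)/2$ alone does not rule out every $k\geq 3$: for example, $(d,k)=(4,11)$ satisfies $d+k=15 \mid 60$. To dispose of such residual cases I would further exploit that the set of Fourier indices with $\alpha_k=0$ must be stable under the Galois action of $(\mathbb{Z}/n'\mathbb{Z})^\times$ (because each $\alpha_k$ is a rational integer), so it decomposes as a union of complete families of primitive $e$-th roots of unity for divisors $e\mid n'$; combined with the constraint that $B$ is a $\pm 1$ circulant whose spectrum has moduli in $\{0,m\}$, this Galois-and-arithmetic structure---analyzed case-by-case on the divisors of $n'$ in the spirit of Stanton and Mullin's original argument for $d=0$---forces $k=1$ and completes the proof.
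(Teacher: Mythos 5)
Your reduction is correct as far as it goes, and it is a genuinely different (spectral) route from the paper's: the parity block decomposition $C=\bigl(\begin{smallmatrix} A & B\\ B^T & A\end{smallmatrix}\bigr)$, the splitting $AB=0$, $A^2+BB^T=m^2I$, the trace count giving multiplicity $(n')^2/m^2$ for the zero eigenvalue of $A$, and hence the divisibility $(d+k)\mid (k^2-1)/2$ are all sound. But the proof is not complete, and the gap is exactly where you flag it. The divisibility condition is only a necessary condition and, as your own example $(d,k)=(4,11)$, $n=210$ shows, it admits infinitely many surviving pairs with $k>1$. The proposed rescue --- Galois stability of $\{k:\alpha_k=0\}$ plus an unspecified ``case-by-case analysis on the divisors of $n'$'' --- is a direction, not an argument. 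For $n'=105$ the zero set must be a union of classes $\{k:\gcd(k,n')=e\}$ of total cardinality $49$, and several such unions exist (e.g.\ $1+48$ or $1+24+12+8+4$), so the Galois constraint alone does not yield a contradiction; one would then have to bring in the $\pm1$ structure of $B$ and its flat spectrum, which is essentially a circulant-weighing-matrix nonexistence problem and is nowhere carried out. The assertion that this ``forces $k=1$'' is therefore unsubstantiated, and the theorem is not proved.

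For contrast, the paper's proof avoids any spectral case analysis. It fixes an odd $\ell$, writes the orthogonality of rows $0$ and $\ell$ as $d=|S|$ where $S$ is a sum of $\tfrac{n}{2}-1$ products $c_ic_j$, and organizes these products as the edges of a graph on $\{1,\dots,\tfrac{n}{2}\}$ consisting of one simple path from $\ell$ to $\tfrac{n}{2}$ plus cycles. A parity count of negative edges, combined with $d\equiv\tfrac{n}{2}-1\pmod 4$ (Proposition~\ref{Prop. d equiv}, which you did not use), forces $c_\ell=c_{n/2}$ for every odd $\ell$; then $\bigl|\sum_{j=1}^{n/2}c_{2j-1}\bigr|=\tfrac{n}{2}$, and Proposition~\ref{Prop. sum odd} gives $d^2+n-1=(\tfrac{n}{2})^2$, i.e.\ $n=2(d+1)$, uniformly in $d$ and with no residual cases. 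If you want to salvage your approach you would need to supply the missing nonexistence argument for flat-spectrum $\pm1$ circulants $B$ with $m<n'$; as written, the last paragraph is a conjecture, not a proof.
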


\begin{proof}
The even parity of $d$ implies that $\frac{n}{2}$ is an odd integer (Proposition~\ref{Prop1.d}). Since $C$ is symmetric due to Proposition~\ref{Prop.even}, its $0$th row and the $\ell$th row for $\ell\in\left\{1,\ldots,\frac{n}{2}-1\right\}$ take the form
$$
\begin{array}{ccccccccccccccl}
d & c_{1} & \cdots & c_{\ell-1} & c_{\ell} & c_{\ell+1} & \cdots & c_{\frac{n}{2}} & c_{\frac{n}{2}-1} & \cdots & c_{\frac{n}{2}-\ell+1} & c_{\frac{n}{2}-\ell} & \cdots & c_1 \\
c_\ell & c_{\ell-1} & \cdots & c_1 & d & c_{1} & \cdots & c_{\frac{n}{2}-\ell} & c_{\frac{n}{2}-\ell+1} & \cdots & c_{\frac{n}{2}-1} & c_{\frac{n}{2}} & \cdots & c_{\ell+1} & .
\end{array}
$$
Their scalar product shall be zero, i.e.,
\begin{equation}\label{SPell}
2dc_\ell+\sum_{j=1}^{\ell-1}c_jc_{\ell-j}+2\sum_{j=1}^{\frac{n}{2}-\ell}c_jc_{j+\ell}+\sum_{j=\frac{n}{2}-\ell+1}^{\frac{n}{2}-1}c_{j}c_{n-\ell-j}=0\,.
\end{equation}
From now on let $\ell$ be odd. We have $\ell=2h+1$ for some $h$, and
\begin{gather*}
\sum_{j=1}^{\ell-1}c_jc_{\ell-j}=2\sum_{j=1}^{h}c_jc_{\ell-j}\,; \\
\sum_{j=\frac{n}{2}-\ell+1}^{\frac{n}{2}-1}c_{j}c_{n-\ell-j}=2\sum_{j=\frac{n}{2}-h}^{\frac{n}{2}-1}c_{j}c_{n-\ell-j}\,.
\end{gather*}
With regard to these two identities, equation~\eqref{SPell} implies
\begin{equation}\label{dS}
d=\left|\sum_{j=1}^{\frac{n}{2}-\ell}c_jc_{\ell+j}+\sum_{j=1}^{h}c_jc_{\ell-j}+\sum_{j=\frac{n}{2}-h}^{\frac{n}{2}-1}c_{j}c_{n-\ell-j}\right|\,.
\end{equation}
Let us denote the sum appearing on the right hand side of equation~\eqref{dS} by $S$, i.e.,
\begin{equation}\label{S}
S:=\underbrace{\sum_{j=1}^{\frac{n}{2}-\ell}c_jc_{\ell+j}}_{S_1}+\underbrace{\sum_{j=1}^{h}c_jc_{\ell-j}}_{S_2}+\underbrace{\sum_{j=\frac{n}{2}-h}^{\frac{n}{2}-1}c_{j}c_{n-\ell-j}}_{S_3}\,.
\end{equation}
The sum $S$ consists of products $c_ic_j$ for $i,j\in\{1,\ldots,\frac{n}{2}\}$. It is easy to see that each term $c_ic_j$ for $i,j\in\{1,\ldots,\frac{n}{2}\}$ occurs at most once in $S$. Let us define a graph $G=(V,E)$ with the set of vertices $V=\{1,\ldots,\frac{n}{2}\}$ and the set of edges $E$ given by the following condition: $\{i,j\}\in E$ if and only if $c_ic_j$ is a summand of $S$.
Let us show that vertices $\ell$ and $\frac{n}{2}$ of $G$ have degree $1$ and all others have degree $2$. We distinguish two cases. For $\ell<\frac{n+2}{4}$, we have:
\begin{itemize}
\item If $j\in[1,\ell-1]$, then the factor $c_j$ occurs once in $S_1$ (in the product $c_jc_{j+\ell}$) and once in $S_2$ (in the product $c_jc_{\ell-j}$). Recall that the quantity $h$ appearing in $S_2$ and $S_3$ was introduced by the relation $\ell=2h+1$.
\item If $j\in[\ell+1,\frac{n}{2}-\ell]$, then the factor $c_j$ occurs only in summands of $S_1$, namely, in the products $c_jc_{j+\ell}$ and $c_{j-\ell}c_j$.
\item If $j\in[\frac{n}{2}-\ell+1,\frac{n}{2}-1]$, then the factor $c_j$ occurs once in $S_1$ (in the product $c_{j-\ell}c_j$) and once in $S_3$ (in the product $c_jc_{n-\ell-j}$).
\item The factor $c_\ell$ occurs only in the sum $S_1$, namely, in the product $c_\ell c_{2\ell}$.
\item The factor $c_\frac{n}{2}$ occurs only in the sum $S_1$, namely, in the product $c_{\frac{n}{2}-\ell}c_\frac{n}{2}$.
\end{itemize}
Case $\ell\geq\frac{n+2}{4}$ is similar:
\begin{itemize}
\item If $j\in[1,\frac{n}{2}-\ell]$, the factor $c_j$ occurs once in $S_1$ and once in $S_2$;
\item if $j\in[\frac{n}{2}-\ell+1,\ell-1]$, the factor $c_j$ occurs once in $S_2$ and once in $S_3$;
\item the factor $c_\ell$ occurs only once in $S_3$;
\item if $j\in[\ell+1,\frac{n}{2}-1]$, the factor $c_j$ occurs once in $S_1$ and once in $S_3$;
\item the factor $c_\frac{n}{2}$ occurs only once in $S_1$.
\end{itemize}
Consequently, the graph $G$ consists of connected components of two types:
\begin{itemize}
\item a simple path $P=(v_0,v_1,\ldots,v_{L})$ with $v_0=\ell$ and $v_{L}=\frac{n}{2}$;
\item a certain number (possibly zero) of simple cycles $R_k=(v^{(k)}_0,v^{(k)}_1,\ldots,v^{(k)}_{L_k})$ with $v^{(k)}_0=v^{(k)}_{L_k}$, where $k\in K$. If the graph $G$ is connected, then $G$ consists of the simple path $P$ and the set $K$ is empty.
\end{itemize}
The lengths $L$ and $L_k$, as well as the cardinality of $K$, are not important for our considerations.

Since the products $c_ic_j$ occurring as summands of $S$ represent the edges of $G$, we can rearrange them to follow the order of edges on the path $P$ and on the cycles $R_k$,
\begin{equation}\label{SS}
S=\sum_{i=0}^{L-1}c_{v_i}c_{v_{i+1}}+\sum_{k\in K}\sum_{i=0}^{L_k-1}c_{v^{(k)}_i}c_{v^{(k)}_{i+1}}\,.
\end{equation}

The sum $S$ contains $\frac{n}{2}-1$ terms of type $\pm1$ by \eqref{S}. Therefore, $S=\frac{n}{2}-1-2s$, where $s$ is the total number of negative summands in $S$.
Equation~\eqref{dS} says that $d=|S|$, i.e.,
\begin{equation}\label{d|S|}
d=\left|\frac{n}{2}-1-2s\right|\,.
\end{equation}
The left hand side of \eqref{d|S|} satisfies $d\equiv\frac{n}{2}-1\pmod4$ according to Proposition~\ref{Prop.even}.
The right hand side of \eqref{d|S|} must be an even integer (because the left hand side is even by assumption); hence we get $|\frac{n}{2}-1-2s|\equiv\frac{n}{2}-1-2s\pmod4$.
Combining these two facts, we obtain $\frac{n}{2}-1\equiv\frac{n}{2}-1-2s\pmod4$, i.e., $2s\equiv0\pmod4$. This means that $s$ is even, i.e., the sum $S$ must contain an even number of negative summands.

Equation~\eqref{SS} implies that the number of negative summands in $S$ is equal to the number of sign changes in the sequence $c_{v_0},\ldots,c_{v_{L}}$ plus the number of sign changes in all the sequences $c_{v^{(k)}_0},\ldots,c_{v^{(k)}_{L_k}}$ for $k\in K$. Since $v^{(k)}_0=v^{(k)}_{L_k}$ for each $k$ (recall that $R_k$ is a cycle), each sequence $c_{v^{(k)}_0},\ldots,c_{v^{(k)}_{L_k}}$ contains an even number of sign changes. Therefore, there must be an even number of sign changes in the sequence $c_{v_0},\ldots,c_{v_{L}}$ as well; hence $c_{v_0}=c_{v_{L}}$. We have $v_0=\ell$ and $v_{L}=\frac{n}{2}$, whence we get the condition
\begin{equation}\label{c_ell}
c_\ell=c_{\frac{n}{2}}\,.
\end{equation}
Equation~\eqref{c_ell} is valid for any odd number $\ell=1,3,\ldots,\frac{n}{2}-2$. The symmetry of $C$ means $c_i=c_{n-i}$ for all $i=1,\ldots,\frac{n}{2}$; therefore, \eqref{c_ell} is satisfied also for $\ell=\frac{n}{2}+2,\ldots,n-3,n-1$. Consequently,
\begin{equation}\label{sum odd 1}
\sum_{j=1}^{\frac{n}{2}}c_{2j-1}=\frac{n}{2}c_{\frac{n}{2}}\,.
\end{equation}
At the same time we have, due to Proposition~\ref{Prop.even},
\begin{equation}\label{sum odd 2}
\left(\sum_{j=1}^{\frac{n}{2}}c_{2j-1}\right)^2=d^2+n-1\,.
\end{equation}
Equations~\eqref{sum odd 1} and \eqref{sum odd 2} imply $d^2+n-1=\left(\frac{n}{2}\right)^2$; hence $d=\frac{n}{2}-1$.
\end{proof}

Using Proposition~\ref{Prop1.d}, Corollary~\ref{Coro.n} and Theorem~\ref{Thm: d even}, we can immediately disprove the existence of matrices $C$ with the property $n\neq2d+2$ for all pairs $(n,d)$ up to the order $n=50$ with the following $4$ exceptions:
$$
(16,1),\; (28,3),\; (36,1),\; (40,5)\,.
$$
A computer calculation confirmed that there is no solution for any of the pairs $(n,d)$ in the above list. In other words, up to the order $n=50$ all matrices $C$ obeying \eqref{Conditions} have the property $n=2d+2$.
Our findings lead us to proposing the following conjecture.

\begin{conj}\label{Conjecture}
A circulant matrix $C$ of order $n\geq2$ having the generator $(d,c_1,\ldots,c_{n-1})$ with $d\geq 0$ and $c_j\in\{1,-1\}$ for all $j=1,\ldots,n-1$ satisfies the condition $CC^T=(d^2+n-1)I$ only if $n=2d+2$.
\end{conj}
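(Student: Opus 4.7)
The plan is to split on the arithmetic nature of $d$. By Proposition~\ref{Corol.d}, $2d$ is necessarily a non-negative integer, and the conclusion $n=2(d+1)$ is automatic when $d$ is a half-integer. By Theorem~\ref{Thm: d even}, it also holds when $d$ is an even non-negative integer. The conjecture therefore reduces to the single outstanding case in which $d$ is a positive odd integer; there Proposition~\ref{Corol.d} forces $n\equiv 0\pmod 4$, and Proposition~\ref{Prop.n,d} forces $n-1=k(2d+k)$ for some positive integer $k$, with $k=1$ recovering exactly $n=2(d+1)$. What remains to be proven is therefore that $k\geq 2$ is impossible.

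In this reduced setting I would recast the orthogonality condition $CC^T=(d^2+n-1)I$ as a factorization in the cyclotomic ring $\mathbb{Z}[\omega]$. Setting $D=\{j\in\{1,\ldots,n-1\}:c_j=-1\}$ and $\chi(x)=d+\sum_{j=1}^{n-1}c_jx^j$, the eigenvalues $\lambda_k=\chi(\omega^k)$ are algebraic integers of absolute value $\sqrt{d^2+n-1}$, and condition~\eqref{Conditions} becomes an identity in $\mathbb{Z}[\omega]$ involving the indicator polynomial of $D$ --- a close cousin of the group-ring equation that encodes a putative circulant Hadamard matrix. On this object I would deploy the standard toolkit of that problem: $2$-adic valuation arguments in the style of Turyn to restrict $n$ modulo powers of $2$; self-conjugacy and field-descent inequalities in the style of Schmidt~\cite{Sch1,Sch2} to bound $|D|$ from the prime factorization of $n$; and multiplier theorems from character theory to force symmetries of $D$ that are incompatible with $k\geq 2$.

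The principal obstacle is intrinsic: the subcase $d=1$ is exactly the classical circulant Hadamard conjecture, open for more than half a century, so any method strong enough to resolve the conjecture in full would settle that problem as well. A realistic plan is therefore two-layered. First, prove the conjecture conditionally on the circulant Hadamard conjecture, possibly by bootstrapping from $d=1$ to larger odd $d$ through the divisibility structure imposed by Proposition~\ref{Prop.n,d}. Second, exploit the fact that increasing $d$ rapidly thins out the set of admissible pairs $(n,k)$: for each fixed odd $d\geq 3$ one should combine Proposition~\ref{Prop.n,d} with the divisibility constraint $4\mid n$ and with the character-theoretic bounds above to dispose of all $k\geq 2$ one arithmetic progression at a time, mirroring how the computer verification up to $n=50$ was carried out.
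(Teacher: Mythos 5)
There is a genuine gap, and it is intrinsic to the statement rather than a flaw in your bookkeeping: this is stated as a \emph{conjecture} in the paper, and the paper itself offers no proof. What the paper does establish is exactly the reduction you describe --- Proposition~\ref{Corol.d} forces $2d\in\mathbb{N}_0$ and settles the half-integer case, Theorem~\ref{Thm: d even} settles even integer $d$, and Proposition~\ref{Prop.n,d} confines the remaining odd-$d$ case to $n=k(2d+k)+1$ --- plus the further partial results that $n=2(d+1)$ holds when $C$ is symmetric (Theorem~\ref{Thm: symmetric}) or $n-1$ is prime, and a computer verification that eliminates the only surviving pairs $(16,1)$, $(28,3)$, $(36,1)$, $(40,5)$ up to $n=50$. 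Your first paragraph reproduces this reduction correctly and matches the paper's own accounting in Remark~\ref{Rem.conj}. But everything after that is a research program, not an argument: naming Turyn's $2$-adic valuations, Schmidt's field descent, and multiplier theorems does not dispose of $k\geq2$, and since the subcase $d=1$, $k\geq2$ \emph{is} the circulant Hadamard conjecture, no such disposal is currently known to anyone.

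Two more specific concerns with the fallback plans in your last paragraph. First, ``conditionally on the circulant Hadamard conjecture'' only eliminates $d=1$; there is no visible mechanism for ``bootstrapping'' from $d=1$ to $d=3,5,\ldots$, because the group-ring identities for different $d$ are not nested --- Proposition~\ref{Prop.n,d} constrains admissible $n$ for each $d$ but does not relate solutions across distinct values of $d$. Second, the claim that increasing $d$ ``rapidly thins out'' the admissible pairs is not quite how the paper's evidence works: the surviving pairs were killed by exhaustive search, not by the arithmetic sieve, and the sieve alone leaves infinitely many candidate pairs for each odd $d$. So the honest conclusion is that your proposal correctly identifies which cases are already closed (by the paper's own results) and correctly isolates the open core, but it does not prove the statement, and neither does the paper --- which is why the paper presents it as Conjecture~\ref{Conjecture} supported by computation rather than as a theorem.
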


\begin{rem}\label{Rem.conj}
Let us summarize facts concerning the validity of Conjecture~\ref{Conjecture}.
\begin{itemize}
\item We have established the conjecture for all cases where $d$ is not an odd integer.
\item As a result of performed computer calculations, the conjecture is confirmed for matrices $C$ of orders up to $n=50$.
\item Conjecture~\ref{Conjecture} generalizes the circulant Hadamard conjecture, which corresponds to $d=1$.
\end{itemize}
\end{rem}

\section{Symmetric solutions}\label{Section: symmetric}

In this section we generalize the well-known result about the nonexistence of symmetric circulant Hadamard matrices of order $n>4$ by proving that if a matrix $C$ satisfying conditions~\eqref{Conditions} is symmetric, then $n=2d+2$.

\begin{prop}\label{Prop.symmetric}
If a matrix $C$ satisfies \eqref{Conditions} for an odd $d$ and $C$ is symmetric, then $d^2-1$ is divisible by $2\sqrt{d^2+n-1}$.
\end{prop}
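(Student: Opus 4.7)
The plan is to exploit Fourier duality between the generator $(c_0,\ldots,c_{n-1})$ and the eigenvalue sequence of $C$: deduce the divisibility $\rho\mid n$ by an algebraic-integer argument, and then apply a parity count at the Nyquist frequency $j=n/2$. Throughout, write $\rho:=\sqrt{d^2+n-1}$; by Corollary~\ref{Corol.d}, since $d$ is an odd integer, $n\equiv 0\pmod 4$.

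\medskip

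\emph{Step 1 (integrality and parity of $\rho$).} Because $C$ is symmetric, $CC^T=C^2=\rho^2 I$, so every eigenvalue $\lambda_k$ of $C$ equals $\pm\rho$. Since $\lambda_0=d+c_1+\cdots+c_{n-1}\in\mathbb Z$, the number $\rho$ is a positive integer. Moreover $\rho^2=d^2+n-1\equiv 1+0-1\equiv 0\pmod 4$, so $\rho$ is even.

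\medskip

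\emph{Step 2 (dual function; $\rho\mid n$).} Define $g_k:=\lambda_k/\rho\in\{-1,+1\}$ for $k=0,\ldots,n-1$. Inverse DFT gives
$$
c_j=\frac{1}{n}\sum_{k=0}^{n-1}\lambda_k\,\omega^{-jk}=\frac{\rho}{n}\sum_{k=0}^{n-1}g_k\,\omega^{-jk}\qquad(j=0,\ldots,n-1).
$$
For $j\neq 0$ we have $c_j=\pm 1$, so $\sum_k g_k\omega^{-jk}=\pm n/\rho$. The left side belongs to $\mathbb Z[\omega]$ while the right side is rational, so $n/\rho$ is a rational algebraic integer, hence $n/\rho\in\mathbb Z$. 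Put $t:=n/\rho$.

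\medskip

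\emph{Step 3 (parity count at $j=n/2$).} Specializing Step~2 at $j=n/2$ and using $\omega^{-(n/2)k}=(-1)^k$ gives
$$
\sum_{k=0}^{n-1}g_k(-1)^k=\frac{n c_{n/2}}{\rho}=t\,c_{n/2}=\pm t.
$$
The left-hand side is a sum of $n$ terms each equal to $\pm 1$, so it has the same parity as $n$; since $n$ is even, the sum is even. Therefore $t$ is even, i.e., $2\rho\mid n$.

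\medskip

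\emph{Step 4 (conclusion).} From $\rho^2=d^2+n-1$ we obtain $d^2-1=\rho^2-n$. Since $\rho$ is even, $2\rho\mid\rho^2$; combined with $2\rho\mid n$ from Step~3, this yields $2\rho\mid(\rho^2-n)=d^2-1$, as required.

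\medskip

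The main obstacle is spotting the correct Fourier-dual viewpoint: the $\{\pm 1\}$-valued sequence $g_k=\lambda_k/\rho$ is itself a Fourier dual of $(c_j)$, and its inverse DFT at $j\neq 0$ produces a rational algebraic integer that forces $\rho\mid n$. Once this is in place, the parity count at $j=n/2$ and the evenness of $\rho$ (which crucially uses the hypothesis that $d$ is odd together with $n\equiv 0\pmod 4$) finish the argument without further work.
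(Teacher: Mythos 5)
Your proof is correct and is essentially the paper's argument recast in Fourier language: the key identity $nc_{n/2}=\sum_{k}(-1)^k\lambda_k$ with each $\lambda_k=\pm\rho$ is exactly the trace computation the paper performs on the half-period cyclic shift $M=PC$ of the matrix, and the concluding step ($\rho$ even, $n=\rho^2+1-d^2$, hence $2\rho\mid d^2-1$) is identical. Your Step 2 is harmless but redundant, since Step 3 alone already yields $2\rho\mid n$.
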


\begin{proof}
Since $d$ is an odd integer, $n$ is even due to Proposition~\ref{Prop1.d}. Equation~\eqref{sum same} then implies that $\sqrt{d^2+n-1}$ is an even integer; let us denote this integer by $\ell$. If $C$ is symmetric, it has a generator $(d,c_1,\ldots,c_{\frac{n}{2}-1},c_{\frac{n}{2}},c_{\frac{n}{2}-1},\ldots,c_1)$. Following an idea from~\cite[proof of Theorem~8]{Cr}, let us consider a symmetric circulant matrix $M$ with the generator $(c_{\frac{n}{2}},c_{\frac{n}{2}-1},\ldots,c_1,d,c_1,\ldots,c_{\frac{n}{2}-1})$. Since $CC^T=(d^2+n-1)I$ and $M=PC$ for some permutation matrix $P$, we have $MM^T=(d^2+n-1)I$. Therefore, $M$ has eigenvalues $\pm \ell$ for $\ell=\sqrt{d^2+n-1}$. If we denote the multiplicity of the eigenvalue $+\ell$ of $M$ by $m$, the sum of eigenvalues of $M$ is $2\left(m-\frac{n}{2}\right)\ell$. At the same time the sum of eigenvalues of $M$ is equal to $\mathrm{Tr}(M)=nc_{\frac{n}{2}}$. Comparing these quantities, we obtain $2\ell\mid n$. Now we express $n$ in terms of $\ell$, i.e., $n=\ell^2+1-d^2$. Since $\ell$ is even, we have $2\ell\mid \ell^2$. This allows us to transform the condition $2\ell\mid(\ell^2+1-d^2)$ into $2\ell\mid(d^2-1)$.
\end{proof}

\begin{ex}
Proposition~\ref{Prop.symmetric} implies that a symmetric matrix $C$ satisfying~\eqref{Conditions} with $d=3$ exists only for $n=8$. Indeed, $2\sqrt{3^2+n-1}\mid(3^2-1)$ requires $\sqrt{8+n}=4$; hence $n=8$.
\end{ex}

In~\cite{MKW}, McKay and Wang found a strong inequality between the order $n$ of a symmetric circulant Hadamard matrix and the prime factorization of $n$, and used it for disproving the existence of symmetric Hadamard matrices of order $n>4$. Taking advantage of their idea, we derive a similar inequality for matrices $C$ with a general $d\in\mathbb{N}$ that relates the prime factorization of $n$ to the integer $k$ appearing in formula~\eqref{d in k}.

\begin{prop}\label{Prop. factorization}
Let a symmetric matrix $C$ satisfy \eqref{Conditions} with $d\in\mathbb{N}$ and $n=k(2d+k)+1$ for some odd $k\in\mathbb{N}$. Let $n=q_1^{\alpha_1}q_2^{\alpha_2}\cdots q_r^{\alpha_r}$ be the prime factorization of $n$. Then
\begin{equation}\label{k r}
k+1\leq2^r\,.
\end{equation}
\end{prop}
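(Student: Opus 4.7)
The plan is to combine symmetry of $C$ with a Fourier inversion argument applied to the coefficient $c_1$. Specifically, once one exploits symmetry to show that the eigenvalues of $C$ equal $\pm m$ with $m=d+k$, each divisor $t\mid n$ carries a natural sign $\epsilon_t\in\{\pm1\}$, and the Fourier formula for $c_1$ reduces to a signed M\"obius sum that has precisely $2^r$ nonzero terms.

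Because $C$ is symmetric, $C^2=CC^T=m^2 I$ with $m=\sqrt{d^2+n-1}=d+k$ (the latter equality coming from $n=k(2d+k)+1$), so every eigenvalue $\lambda_k=f(\omega^k)$ of $C$ is real of modulus $m$; that is, $\lambda_k\in\{+m,-m\}$. Since the primitive $t$-th roots of unity form a single Galois orbit over $\mathbb{Q}$ and the integers $\pm m$ are Galois-fixed, the value $f(\omega^k)$ depends only on the multiplicative order of $\omega^k$. Hence, for each divisor $t\mid n$ there is a well-defined sign $\epsilon_t\in\{\pm1\}$ such that $f(\zeta)=\epsilon_t m$ for every primitive $t$-th root of unity $\zeta$.

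Applying the inverse discrete Fourier transform to $(\lambda_k)_{k=0}^{n-1}$ and then grouping the indices $k$ according to the order of $\omega^k$ yields
\[
c_1 \;=\; \frac{1}{n}\sum_{k=0}^{n-1}\lambda_k\,\omega^{-k} \;=\; \frac{m}{n}\sum_{t\mid n}\epsilon_t\!\!\sum_{\substack{j=1\\ \gcd(j,t)=1}}^{t}\!\mathrm{e}^{-2\pi\mathrm{i}j/t} \;=\; \frac{m}{n}\sum_{t\mid n}\epsilon_t\,\mu(t),
\]
because the inner sum is the classical Ramanujan sum $c_t(1)$, which equals the M\"obius function $\mu(t)$. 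Since $|c_1|=1$, this gives $n/m = \bigl|\sum_{t\mid n}\epsilon_t\mu(t)\bigr|$. The M\"obius function vanishes unless $t$ is squarefree, and the factorization $n=q_1^{\alpha_1}\cdots q_r^{\alpha_r}$ has exactly $2^r$ squarefree divisors; each contributing summand has absolute value $1$, so $n/m\leq 2^r$.

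A direct algebraic computation gives the identity $n-(k+1)m=(k-1)(d-1)$, which is non-negative under the standing hypothesis $d\geq1$ and $k\geq1$, so $n/m\geq k+1$. Combining the two inequalities produces $k+1\leq 2^r$, as required. The decisive choice is to apply Fourier inversion at the specific index $j=1$: among all the coefficients $c_i$, this is the one whose inversion formula collapses the sum over $n$ eigenvalues into exactly $2^r$ nonzero M\"obius contributions; taking any other $c_i$ would give a weaker bound, so identifying this as the right index is the main conceptual step.
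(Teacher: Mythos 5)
Your proof is correct, and it runs in the opposite direction to the paper's. The paper first shows $\lambda_j=\lambda_m$ whenever $\gcd(j,n)=m$ (via cyclotomic polynomials, equivalent to your Galois-orbit step), then pulls this back through the invertibility of the DFT to the statement $c_j=c_m$, and finally expands the \emph{forward} transform $\lambda_1=d+\sum_{m\mid n,\,m<n}c_m\,\mu(n/m)$ to get $d+k=|\lambda_1|\leq d-1+2^r$. You instead stay on the eigenvalue side and apply the \emph{inverse} transform to $c_1$, obtaining $c_1=\frac{m}{n}\sum_{t\mid n}\epsilon_t\mu(t)$, whence $n/m\leq 2^r$; you then close with the identity $n-(k+1)(d+k)=(k-1)(d-1)\geq0$, which the paper does not need. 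Both arguments use $d\geq1$ at the final step and both count the $2^r$ squarefree divisors, so the bound is the same; your version is slightly leaner here because it never needs the transfer to the coefficients, but the paper's lemma ``$\gcd(j,n)=m\Rightarrow c_j=c_m$'' pays for itself later, where it is reused to pin down the entries of $C$ in the exceptional cases $(n,d)=(120,5)$ and $(924,29)$. One small inaccuracy in your closing remark: inverting at any index $i$ with $\gcd(i,n)=1$ gives exactly the same $2^r$-term Möbius sum, so $i=1$ is convenient rather than uniquely optimal; and note that your equality $n/m=\bigl|\sum_{t\mid n}\epsilon_t\mu(t)\bigr|$ silently proves the divisibility $m\mid n$, which the paper obtains separately by a trace argument.
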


\begin{proof}
We will proceed in a similar way as McKay and Wang did in~\cite[Proof of Theorem~3]{MKW}, with some modifications that are required with regard to the generality of $d$.
The first step consists in proving that
\begin{equation}\label{gcd}
\gcd(j,n)=m \quad\text{implies}\quad c_j=c_m\,.
\end{equation}
For each $m\mid n$ we define the polynomial
$$
P_m(x)=c_0+c_{1}x+c_{2}x^{2}+\cdots+c_{n-1}x^{n-1}-\lambda_m\,,
$$
where each $\lambda_m$ is given by \eqref{eigenvalues}.
Since $C=C^T$ and $CC^T=(d^2+n-1)I$, the eigenvalues of $C$ satisfy $\lambda_m=\pm\sqrt{d^2+n-1}$. The assumption $n=k(2d+k)+1$ for some $k\in\mathbb{N}$ then gives $\lambda_m=\pm(d+k)\in\mathbb{Z}$ for all $m$. Therefore, the polynomial $P_m(x)$ has integer coefficients for each $m$. Furthermore, $P_m(\omega^m)=\lambda_m-\lambda_m=0$ for every $m$, where $\omega=\mathrm{e}^{2\pi\mathrm{i}/n}$.

Let $\Phi_N(x)$ denote the $N$th cyclotomic polynomial. Then $\Phi_{N}(\mathrm{e}^{2\pi\mathrm{i}K/N})=0$ for every $K\in\{0,1,\ldots,N-1\}$ satisfying $\gcd(K,N)=1$. If we set $N=\frac{n}{m}$ and $K=1$, we get $\Phi_{\frac{n}{m}}(\omega^m)=0$. Since $P_m(\omega^m)=0$ and $\Phi_N(x)$ is irreducible by definition, necessarily $\Phi_{\frac{n}{m}}(x)\mid P_m(x)$.

The fact $\Phi_{\frac{n}{m}}(x)\mid P_m(x)$ implies that $P_m(x)=0$ whenever $\Phi_{\frac{n}{m}}(x)=0$. From now on let $\gcd(j,n)=m$. If we set $N=\frac{n}{m}$ and $K=\frac{j}{m}$, we have $\gcd(K,N)=\frac{1}{m}\gcd(j,n)=1$. Therefore, $\Phi_{\frac{n}{m}}(\mathrm{e}^{2\pi\mathrm{i}j/n})=0$. Hence we infer $P_m(\mathrm{e}^{2\pi\mathrm{i}j/n})=0$. This means $\lambda_j-\lambda_m=0$, i.e., $\lambda_j=\lambda_m$.

Using formula~\eqref{invDFT} and the result $\lambda_j=\lambda_m$ for $\gcd(j,n)=m$, we can express $c_j$ in the form
\begin{equation}\label{cj}
c_j=\frac{1}{n}\left[\lambda_0+\sum_{\substack{h\mid n \\ 1\leq h\leq n-1}}\lambda_h\left(\sum_{\substack{\gcd(\ell,n)=h \\ 1\leq \ell<n}}\omega^{-\ell j}\right)\right]\,.
\end{equation}
If $\gcd(j,n)=m$, we have $j=Km$ for some $K$ such that $\gcd(K,n)=1$. Then
$$
\sum_{\substack{\gcd(\ell,n)=h \\ 1\leq \ell<n}}\omega^{-\ell j}=
\sum_{\substack{\gcd(\frac{\ell}{h},\frac{n}{h})=1 \\ 1\leq\frac{\ell}{h}<\frac{n}{h}}}\left(\omega^{hm}\right)^{-K\frac{\ell}{h}}=
\sum_{\substack{\gcd(\ell',\frac{n}{h})=1 \\ 1\leq \ell'<\frac{n}{h}}}\left(\omega^{hm}\right)^{-K\ell'}=
\sum_{\substack{\gcd(\ell'',\frac{n}{h})=1 \\ 1\leq \ell''<\frac{n}{h}}}\left(\omega^{hm}\right)^{-\ell''}=
\sum_{\substack{\gcd(\ell,n)=h \\ 1\leq \ell<n}}\omega^{-\ell m}\,,
$$
for every $h$ dividing $n$, where we used the fact that $\gcd(K\ell',\frac{n}{h})=1$ if and only if $\gcd(\ell',\frac{n}{h})=1$,
which follows from $\gcd(K,\frac{n}{h})=1$.
Considering~\eqref{cj}, we conclude: If $\gcd(j,n)=m$, then $c_j=c_m$.

Now we can proceed to the second step. Equation~\eqref{eigenvalues} together with \eqref{gcd} allows us to express the eigenvalue $\lambda_1$ of $C$ in the form
$$
\lambda_1=c_0+\sum_{j=1}^{n-1}c_j\omega^j=c_0+\sum_{\substack{m\mid n \\ 1\leq m\leq n-1}}c_m\left(\sum_{\substack{\gcd(j,n)=m \\ 1\leq j\leq n-1}}\omega^j\right)\,,
$$
where $c_0=d$. We have
$$
\sum_{\substack{\gcd(j,n)=m \\ 1\leq j\leq n-1}}\omega^j=\sum_{\substack{\gcd(q,\frac{n}{m})=1 \\ 1\leq q\leq \frac{n}{m}-1}}(\omega^m)^q\,,
$$
which is the sum of primitive $\frac{n}{m}$th roots of unity. According to a classical formula \cite[(16.6.4)]{HW}, this sum is equal to $\mu(\frac{n}{m})$, where $\mu$ is the M\"{o}bius function. Therefore,
$$
\lambda_1=d+\sum_{\substack{m\mid n \\ 1\leq m\leq n-1}}c_m\mu\left(\frac{n}{m}\right)\,.
$$
Since $\mu(1)=1$, we can rewrite the equation in the form
\begin{equation}\label{lambda1}
\lambda_1=d-1+\mu(1)+\sum_{\substack{m\mid n \\ 1\leq m\leq n-1}}c_m\mu\left(\frac{n}{m}\right)\,.
\end{equation}
We have $|\lambda_1|=d+k$, $d\in\mathbb{N}$ and $|c_j|=1$ for all $j\geq1$. Therefore, equation~\eqref{lambda1} implies
$$
d+k\leq d-1+\sum_{\substack{m\mid n \\ 1\leq m\leq n}}\left|\mu\left(\frac{n}{m}\right)\right|=d-1+\sum_{\substack{m\mid n \\ 1\leq m\leq n}}|\mu(m)|\,.
$$
Hence
\begin{equation}\label{|lambda1|}
k+1\leq\sum_{\substack{m\mid n \\ 1\leq m\leq n}}|\mu(m)|\,.
\end{equation}
Let $n=q_1^{\alpha_1}q_2^{\alpha_2}\cdots q_r^{\alpha_r}$ be the prime factorization of $n$. By definition of $\mu$, we have
$$
|\mu(\ell)|=\begin{cases}
1, & \text{if $\ell$ is a square-free positive integer;} \\
0, & \text{if $\ell$ has a squared prime factor.}
\end{cases}
$$
Therefore, if $n=q_1^{\alpha_1}q_2^{\alpha_2}\cdots q_r^{\alpha_r}$, the sum on the right hand side of inequality~\eqref{|lambda1|} is equal to the number of subsets of $\{q_1,\ldots,q_r\}$, i.e., to $2^r$. Hence we obtain inequality~\eqref{k r}.
\end{proof}

\begin{rem}
The inequality of McKay and Wang, derived for $d=1$ and $n>1$, reads $\sqrt{n}\leq2^r$.
\end{rem}

Technical Lemma~\ref{Lemma bL} below contains a result that will be used twice in the sequel. At first, it will allow us to estimate $n$ in the proof of Proposition~\ref{Prop. k large}. Secondly, it will be crucial for reducing the proof of Proposition~\ref{Prop. k small} to an examination of a finite number of cases.

Since the existence of matrices $C$ satisfying \eqref{Conditions} for $n\neq2d+2$ is impossible for $d\notin\mathbb{N}_0$ or $d$ being even (Proposition~\ref{Prop1.d} and Theorem~\ref{Thm: d even}), we may assume that $d$ is odd.

\begin{lem}\label{Lemma bL}
Let a symmetric matrix $C$ satisfy \eqref{Conditions} for an odd $d$, and let $n=k(2d+k)+1$ for some odd $k>1$. Then there exist $t,u,w,z\in\mathbb{N}$ such that $w<t$ and
\begin{equation}\label{bL}
\frac{k+1}{2}=tu\,, \quad \frac{k-1}{2}=wz \quad\text{and}\quad n=4tz(2tu-1-uw)\,.
\end{equation}
\end{lem}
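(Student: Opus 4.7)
The plan is to leverage the divisibility $2\sqrt{d^2+n-1} \mid d^2 - 1$ provided by Proposition~\ref{Prop.symmetric}, and to transfer it from $d^2-1$ to $k^2-1$, after which the desired factorization emerges by splitting $\sqrt{d^2+n-1}/2$ against the coprime pair $(k+1)/2$, $(k-1)/2$. Abbreviating $\ell := \sqrt{d^2+n-1}$, the assumption $n = k(2d+k)+1$ identifies $\ell = d+k$, so Proposition~\ref{Prop.symmetric} reads $2(d+k) \mid d^2 - 1$.

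First I would observe that $d$ odd forces $n \equiv 0 \pmod 4$ by Proposition~\ref{Corol.d}, hence $\ell^2 = d^2+n-1$ is even, $\ell$ is even, and $k = \ell - d$ is odd. Set $m := \ell/2$, $a := (k+1)/2$, $b := (k-1)/2$; these are positive integers (using $k > 1$), and $a = b+1$, so $\gcd(a,b) = 1$. Substituting $d = \ell - k$ into $d^2 - 1$ and reducing modulo $2\ell$ (using $\ell^2 \equiv 0 \pmod{2\ell}$, which holds because $\ell$ is even) converts $2\ell \mid d^2 - 1$ into $2\ell \mid k^2 - 1 = 4ab$, i.e., $m \mid ab$.

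Because $\gcd(a,b) = 1$, the divisor $m$ splits uniquely into an $a$-part and a $b$-part: taking $t := \prod_{p \mid a} p^{v_p(m)}$ and $z := m/t$, one gets $t \mid a$, $z \mid b$, $\gcd(t,z) = 1$, and $tz = m$. Define $u := a/t$ and $w := b/z$; all four numbers $t,u,w,z$ are positive integers, and $(k+1)/2 = tu$, $(k-1)/2 = wz$. Then
\[
d = \ell - k = 2m - (2b+1) = 2tz - 2wz - 1 = 2z(t-w) - 1,
\]
so $d \geq 1$ forces $t > w$, which is the required inequality $w < t$.

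The formula for $n$ is then a direct calculation: $4tz(2tu - 1 - uw) = 4m(k - uw) = 4mk - 4(tu)(zw) = 2\ell k - 4ab = 2k(d+k) - (k^2-1) = k(2d+k) + 1 = n$. The main obstacle is really just the coprime-splitting step, which relies essentially on $\gcd(a,b) = 1$ and hence on $k$ being odd and $k > 1$; everything else is bookkeeping once Proposition~\ref{Prop.symmetric} has been brought to the clean form $m \mid ab$.
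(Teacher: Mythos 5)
Your proof is correct, and it reaches the factorization of the lemma by a route that differs from the paper's in its key step. Both arguments rest on the same two ingredients --- the divisibility $2(d+k)\mid d^2-1$ from Proposition~\ref{Prop.symmetric} and the identity $\frac{d+k}{2}=\frac{k+1}{2}+\frac{d-1}{2}=\frac{k-1}{2}+\frac{d+1}{2}$ --- but they extract the factorization $m=tz$ with $t\mid\frac{k+1}{2}$ and $z\mid\frac{k-1}{2}$ differently. The paper writes $\frac{d+1}{d+k}=\frac{s}{t}$ in lowest terms, deduces $s\mid\frac{d+1}{2}$ and (via the divisibility) $t\mid\frac{d-1}{2}$, and then sets $u=z-v$, $w=t-s$; the inequality $w<t$ falls out as $s\geq1$. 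You instead reduce $d^2-1$ modulo $2\ell$ (legitimate since $\ell$ is even, so $2\ell\mid\ell^2$) to convert the hypothesis into the clean statement $m\mid ab$ with $a=\frac{k+1}{2}$, $b=\frac{k-1}{2}$ consecutive and hence coprime, and then split $m$ along prime valuations into an $a$-part $t$ and a $b$-part $z$; the inequality $w<t$ then comes from $d=2z(t-w)-1\geq1$. Your reduction to $2\ell\mid k^2-1$ is arguably more transparent and symmetric than the lowest-terms manipulation, and it makes the role of the coprimality of $\frac{k\pm1}{2}$ explicit; the paper's version produces a specific factorization (with $t$ the reduced denominator of $\frac{d+1}{d+k}$) without invoking prime factorizations. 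Since the lemma is purely existential, either factorization suffices, and all your auxiliary claims ($\ell$ even, $k$ odd, $a,b\in\mathbb{N}$ from $k>1$, and the final algebraic verification of $n=4tz(2tu-1-uw)$) check out.
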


\begin{proof}
Since $k$ is odd, we have $\frac{k+1}{2}\in\mathbb{N}$, $\frac{k-1}{2}\in\mathbb{N}$ and $\frac{d+k}{2}\in\mathbb{N}$.
We set
\begin{equation}\label{s/t}
\frac{d+1}{d+k}=\frac{s}{t} \quad\text{ for }\; s,t\in\mathbb{N},\; \gcd(s,t)=1\,.
\end{equation}
With regard to the assumption $k>1$, we have $s<t$.
Equation~\eqref{s/t} implies $\frac{d+k}{2}=\frac{t}{s}\cdot\frac{d+1}{2}$. Since $\gcd(s,t)=1$, we have $s\mid\frac{d+1}{2}$, i.e., $\frac{d+1}{2}=zs$ for some $z\in\mathbb{N}$.
Then
$$
\frac{d+k}{2}=\frac{t}{s}\cdot\frac{d+1}{2}=tz\,.
$$
According to Proposition~\ref{Prop.symmetric}, we have $2(d+k)\mid(d^2-1)$. Therefore, $\frac{(d+1)(d-1)}{2(d+k)}=\frac{s}{t}\cdot\frac{d-1}{2}\in\mathbb{N}$. We use again the assumption $\gcd(s,t)=1$ to infer that $\frac{d-1}{2}=vt$ for some $v\in\mathbb{N}$.
Hence we get
\begin{gather*}
\frac{k+1}{2}=\frac{d+k}{2}-\frac{d-1}{2}=tz-vt=t(z-v)\,;
\\
\frac{k-1}{2}=\frac{d+k}{2}-\frac{d+1}{2}=tz-zs=z(t-s)\,.
\end{gather*}
If we set $z-v=:u$ and $t-s=:w$, we get $\frac{k+1}{2}=tu$ and $\frac{k-1}{2}=zw$.
It remains to express $n$ in terms of $t,u,w,z$. For this purpose we rewrite
$$
n=k(2d+k)+1=2(d+k)(k+1)-2(d+k)-k^2+1=(d+k)\left(2(k+1)-2-\frac{(k+1)(k-1)}{d+k}\right)
$$
and take advantage of equations $k+1=2tu$, $k-1=2wz$ and $d+k=2tz$ derived above. This gives
\begin{equation*}
n=2tz\left(4tu-2-\frac{2tu\cdot2zw}{2tz}\right)=4tz(2tu-1-uw)\,.
\end{equation*}
\end{proof}

\begin{prop}\label{Prop. k large}
Let $d$ be odd and $n=k(2d+k)+1$ for an odd $k$. If $k\geq 2^7$, then a symmetric matrix $C$ satisfying \eqref{Conditions} does not exist.
\end{prop}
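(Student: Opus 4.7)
The plan is to derive two contradictory estimates on $n$: a lower bound coming from the number of distinct prime factors forced by Proposition~\ref{Prop. factorization}, and an upper bound in terms of $k$ extracted from Lemma~\ref{Lemma bL}. Under the hypothesis $k \geq 2^7$ the two bounds will turn out to be incompatible.

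Assume, for contradiction, that a symmetric matrix $C$ exists for the given $d$ and $n$. Let $n = q_1^{\alpha_1} q_2^{\alpha_2} \cdots q_r^{\alpha_r}$ be the prime factorization of $n$. Proposition~\ref{Prop. factorization} gives $k+1 \leq 2^r$, and the hypothesis $k \geq 2^7$ forces $k+1 \geq 129 > 2^7$, so $r \geq 8$. Because $d$ is odd, Proposition~\ref{Corol.d} yields $4 \mid n$, so $2$ appears in the factorization with multiplicity at least~$2$. Consequently,
\[
n \;\geq\; 4 \cdot 3 \cdot 5 \cdot 7 \cdots p_r \;=\; 2\,p_1 p_2 \cdots p_r,
\]
where $p_i$ denotes the $i$th prime.

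For the upper bound I would invoke Lemma~\ref{Lemma bL}. Since $u,w \in \mathbb{N}$, we have $uw \geq 1$, $t \leq (k+1)/2$, and $z \leq (k-1)/2$, so the identity $n = 4tz(2tu - 1 - uw) = 4tz(k-uw)$ yields
\[
n \;\leq\; 4\cdot\tfrac{k+1}{2}\cdot\tfrac{k-1}{2}\cdot(k-1) \;=\; (k-1)^2(k+1) \;<\; 2^{3r},
\]
where the last inequality uses $k+1 \leq 2^r$.

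Combining the two estimates forces $p_1 p_2 \cdots p_r < 2^{3r-1}$. The main (and purely arithmetic) obstacle is to verify that this inequality is violated for every $r \geq 8$. The base case $r=8$ reduces to the numerical check $p_1\cdots p_8 = 9\,699\,690 > 8\,388\,608 = 2^{23}$, and the inductive step follows because for $r \geq 8$ the next prime satisfies $p_{r+1} \geq 23 > 2^3$, giving $p_1 \cdots p_{r+1} > 2^{3} \cdot 2^{3r-1} = 2^{3(r+1)-1}$. This produces the desired contradiction. It is worth noting that the threshold $k \geq 2^7$ in the hypothesis is exactly the value at which this primorial-versus-exponential comparison first succeeds, and the extra factor of $2$ in the lower bound, obtained from the divisibility $4 \mid n$, is precisely what is needed to make the comparison work already at $r = 8$.
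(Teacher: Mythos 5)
Your proof is correct and follows essentially the same route as the paper: Proposition~\ref{Prop. factorization} gives $k+1\leq 2^r$ hence $r\geq 8$, Lemma~\ref{Lemma bL} gives the cubic upper bound $n<(k+1)^3\leq 2^{3r}$, the divisibility $4\mid n$ gives $n\geq 2\,p_1\cdots p_r$, and the primorial comparison $p_1\cdots p_r>2^{3r-1}$ for $r\geq 8$ yields the contradiction. The only (immaterial) difference is how the cubic bound is extracted from the identity $n=4tz(2tu-1-uw)$ — you use $t\leq\frac{k+1}{2}$, $z\leq\frac{k-1}{2}$, $uw\geq1$, while the paper uses $tu>z$ — and both reduce to the same arithmetic.
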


\begin{proof}
Let $n=q_1^{\alpha_1}q_2^{\alpha_2}\cdots q_r^{\alpha_r}$ be the prime factorization of $n$.
If $r\leq7$, then $k+1>k\geq2^7\geq2^r$, and the statement follows straightforwardly from Proposition~\ref{Prop. factorization}.
So let $r\geq8$.
According to Lemma~\ref{Lemma bL}, values $n$ and $k$ satisfy equations~\eqref{bL}. In particular, we have
$$
tu=wz+1>z\,;
$$
hence
\begin{equation}\label{n k+1}
n=4tz(2tu-1-uw)<4t\cdot tu\cdot2tu=8t^3u^2\leq8t^3u^3=(2tu)^3=(k+1)^3\,.
\end{equation}
Since $d$ is odd, $n$ is a multiple of $4$ due to Proposition~\ref{Prop1.d}. Therefore, $q_1=2$ and $\alpha_1\geq2$. Then
\begin{equation}\label{n pr}
n\geq 2^2q_2q_3\cdots q_r\geq2p_r\#\,,
\end{equation}
where $p_r\#=\prod_{j=1}^r{p_j}=2\cdot3\cdot5\cdots p_r$ is the $r$th primorial number (the product of the first $r$ primes).
We have
\begin{equation}\label{primorial}
p_r\#>\frac{8^r}{2} \quad \text{for all $r\geq8$}\,,
\end{equation}
which follows from the fact that $p_8\#=9699690$, $\frac{8^3}{2}=8388608$ and $p_j>8$ for all $j>8$.
When we combine inequalities~\eqref{n k+1}, \eqref{n pr} and~\eqref{primorial}, we get
$$
k+1>\sqrt[3]{2p_r\#}>2^r \quad \text{for all $r\geq8$}\,,
$$
and the statement again follows from Proposition~\ref{Prop. factorization}.
\end{proof}

\begin{prop}\label{Prop. k small}
Let $d$ be odd and $n=k(2d+k)+1$ for an odd $k$. If $1<k\leq 2^7$, then a symmetric matrix $C$ of order $n$ satisfying \eqref{Conditions} does not exist with possible exceptions for $k=7,n=120$ and $k=13,n=924$.
\end{prop}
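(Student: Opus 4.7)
The plan is to combine Lemma~\ref{Lemma bL} with Proposition~\ref{Prop. factorization} to reduce the statement to a finite, explicit case check. Since $d$ is assumed odd, Proposition~\ref{Corol.d} forces $n \equiv 0 \pmod 4$, and from $n = k(2d+k)+1$ it follows that $k$ itself must be odd. Hence only the $63$ values $k \in \{3, 5, 7, \ldots, 127\}$ need to be examined.

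For each such $k$, Lemma~\ref{Lemma bL} provides a finite parametrization of admissible orders: every possible $n$ arises as $n = 4tz(2tu - 1 - uw)$ for some $(t,u,w,z) \in \mathbb{N}^4$ with $tu = (k+1)/2$, $wz = (k-1)/2$ and $w < t$. Since $(k \pm 1)/2 \leq 64$, the number of admissible quadruples per $k$ is bounded by $\tau((k+1)/2)\,\tau((k-1)/2)$, typically a handful after the constraint $w < t$ is imposed. This produces an explicit, manageable list of candidate pairs $(k, n)$.

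For each such candidate I would compute the prime factorization $n = q_1^{\alpha_1} \cdots q_r^{\alpha_r}$ and test the inequality $k + 1 \leq 2^r$ from Proposition~\ref{Prop. factorization}. Whenever the inequality fails, no symmetric matrix $C$ exists for those parameters. The bound $n < (k+1)^3 \leq 128^3 \approx 2.1 \cdot 10^6$ from the proof of Proposition~\ref{Prop. k large} caps candidate orders so factorizations are computationally trivial. A systematic tabulation is expected to show that every candidate is excluded except the two pairs $(k, n) = (7, 120)$, where $120 = 2^3 \cdot 3 \cdot 5$ gives $r = 3$ and $k+1 = 2^3$, and $(k, n) = (13, 924)$, where $924 = 2^2 \cdot 3 \cdot 7 \cdot 11$ gives $r = 4$ and $k+1 = 14 < 2^4$. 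These two pairs survive precisely because Proposition~\ref{Prop. factorization} is tight or near-tight for them and hence delivers no contradiction.

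The main obstacle is organisational rather than conceptual: the enumeration must be performed uniformly across the $63$ values of $k$ without missing or duplicating quadruples, and presented compactly (most naturally as a single table indexed by $k$ recording the divisor pairs, the resulting $n$, its factorization, and the verdict of the test). A minor subtlety is that the proof of Lemma~\ref{Lemma bL} implicitly requires $\gcd(w, t) = 1$; for the exclusion argument it is harmless --- and in fact simpler --- to ignore this constraint and test the slightly larger set of quadruples, since excluding a superset automatically excludes its subset.
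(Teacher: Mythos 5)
Your proposal follows the paper's proof essentially verbatim: restrict to odd $k\le 127$, use Lemma~\ref{Lemma bL} to enumerate the finitely many quadruples $(t,u,w,z)$ and hence candidate orders $n$ for each $k$, then apply the inequality of Proposition~\ref{Prop. factorization} to each factorized $n$, leaving exactly $(k,n)=(7,120)$ and $(13,924)$. Your side remark that dropping the implicit coprimality of $w$ and $t$ only enlarges the set being excluded, and is therefore harmless, is also correct.
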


\begin{proof}
Our strategy is to verify that for every odd $k\leq2^7$ and for every $n=k(2d+k)+1$ allowed by Lemma~\ref{Lemma bL}, except for $k=7,n=120$ and $k=13,n=924$, we have $k+1>2^r$, where $q_1^{\alpha_1}q_2^{\alpha_2}\cdots q_r^{\alpha_r}$ is the prime factorization of $n$. Then the statement follows from Proposition~\ref{Prop. factorization}.

The verification is done step by step for each $k=3,5,7,\ldots,2^7-1$ using the following procedure, which is based on system~\eqref{bL}.
\begin{enumerate}
\item Find all possible $4$-tuples $(t,u,w,z)\in\mathbb{N}^4$ such that $\frac{k+1}{2}=tu$ and $\frac{k-1}{2}=wz$ with $w<t$.
\item For each $(t,u,w,z)$, set $n=4tz(2tu-1-uw)$ and find the prime factorization $n=q_1^{\alpha_1}q_2^{\alpha_2}\cdots q_r^{\alpha_r}$.
\item Check the inequality $k+1>2^r$ for all values $r$ found in the previous step.
\end{enumerate}

Let us demonstrate the procedure for $k=3,5,7$.

\begin{itemize}
\item Let $k=3$, i.e., $\frac{k+1}{2}=2$. Step 1: The system $tu=2$, $wz=1$, $w<t$ implies $t=2$, $u=1$, $w=1$, $z=1$. Step 2: $n=4\cdot2(2\cdot2-1-1)=16=2^4$; hence $r=1$. Step 3: We have $3+1>2^1$.

\item Let $k=5$. Step 1: $tu=3$, $wz=2$, $w<t$ implies $(t,u,w,z)\in\{(3,1,2,1),(3,1,1,2)\}$. Step 2: For $(3,1,2,1)$ we get $n=12\cdot3=2^2\cdot3^2$; hence $r=2$. For $(3,1,1,2)$ we get $n=24\cdot4=2^5\cdot3$; hence $r=2$. Step 3: In both cases we have $5+1>2^2$.

\item Let $k=7$. Step 1: $tu=4$, $wz=3$, $w<t$ implies $(t,u,w,z)\in\{(4,1,3,1),(4,1,1,3),(2,2,1,3)\}$. Step 2: For $(4,1,3,1)$ we get $n=16\cdot4=2^6$; hence $r=1$. For $(4,1,1,3)$ we get $n=48\cdot6=2^5\cdot3^2$; hence $r=2$. For $(2,2,1,3)$ we get $n=24\cdot5=2^3\cdot3\cdot5$; hence $r=3$. Step 3: If $r=1$ or $r=2$, then $7+1>2^r$. However, if $r=3$, we have $7+1=2^r$, i.e., $7+1\not>2^r$. Case $r=3$ occurs for
$$
n=120\,, \quad d=\frac{1}{2}\left(\frac{n-1}{k}-k\right)=\frac{1}{2}\left(\frac{119}{7}-7\right)=5\,.
$$
\end{itemize}

The calculation is straightforward and can be carried out completely with pen and paper, or on a computer, which gives results immediately. One finds that the inequality $k+1>2^r$ is satisfied for all remaining odd values $9\leq k\leq127$ except for $k=13$ with $(t,u,w,z)=(7,1,2,3)$. In this case we have $n=924=2^2\cdot3\cdot7\cdot11$, thus $r=4$, and $k+1=14\not>2^r$. The corresponding value of $d$ is $d=\frac{1}{2}\left(\frac{924-1}{13}-13\right)=29$.
\end{proof}

\begin{prop}\label{5,29}
There exists no symmetric matrix $C$ satisfying \eqref{Conditions} for $n=120, d=5$ or $n=924,d=29$.
\end{prop}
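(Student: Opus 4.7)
The plan is to dispose of the two exceptional pairs $(n,d)=(120,5)$ and $(924,29)$ by combining the Möbius-type reduction from the proof of Proposition~\ref{Prop. factorization} with a finite case check, since both pairs saturate the inequality $k+1\leq 2^r$ and therefore cannot be excluded by any general bound of the type used so far.

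First, I would extract from the proof of Proposition~\ref{Prop. factorization} the key structural consequence: whenever $C$ is symmetric and $\sqrt{d^2+n-1}=d+k\in\mathbb{N}$, one has $c_j=c_{\gcd(j,n)}$ for every $j\in\{1,\ldots,n-1\}$. In our two cases $d+k=12$ and $d+k=42$, both integers, so this reduction applies, and the generator is determined by the $\pm 1$ values $c_m$ indexed by the proper divisors $m$ of $n$: fifteen parameters for $n=120$ and twenty-three for $n=924$.

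Second, substituting $c_j=c_{\gcd(j,n)}$ into formula~\eqref{eigenvalues} rewrites each eigenvalue $\lambda_j$ as a $\mathbb{Z}$-linear combination of the $c_m$'s with coefficients given by Ramanujan sums $c_{n/m}(j)$; for $j=1$ these specialize to the Möbius values $\mu(n/m)$ already exploited in Proposition~\ref{Prop. factorization}. The extremal constraint $|\lambda_1|=d+k$ is very restrictive: for $n=120$ only seven divisors contribute to the sum (those $m$ with $120/m$ square-free), and $|\lambda_1|=12$ forces this sum of seven $\pm 1$ terms to attain absolute value $7$, pinning down those seven signs up to a global sign. An analogous argument based on the sixteen Möbius-nonvanishing divisors of $924$ narrows the second case. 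Additional constraints $|\lambda_j|=d+k$ for small $j$ coprime to $n$ then trim the remaining free $c_m$'s.

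What survives is a short finite list of candidate generators for each pair, which can be ruled out by direct evaluation of $CC^T$, in the spirit of the computer verifications already invoked in Section~\ref{Section d<}. The main obstacle is precisely that no general inequality developed in the paper excludes these two cases — the McKay–Wang bound and the Lemma~\ref{Lemma bL} bookkeeping are both tight — so the closing step must exploit either the specific Ramanujan-sum constraints arising from the prime factorizations $120=2^{3}\cdot 3\cdot 5$ and $924=2^{2}\cdot 3\cdot 7\cdot 11$, or, failing an elegant algebraic contradiction, a short exhaustive check over the candidates surviving the $\lambda_1$-reduction.
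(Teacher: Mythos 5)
Your plan is essentially the paper's proof: both extract $c_j=c_{\gcd(j,n)}$ and the identity $\lambda_1=d+\sum_{m\mid n,\,m<n}c_m\mu(n/m)$ from the proof of Proposition~\ref{Prop. factorization}, use $|\lambda_1|=d+k$ to force all seven Möbius-weighted signs for $n=120$ (respectively all but exactly one of the fifteen for $n=924$), and finish with an exhaustive computational check of orthogonality over the remaining free parameters $c_m$ with $\mu(n/m)=0$. The only cosmetic difference is that the constraint actually fixes the signs outright (the branch $\lambda_1=-(d+k)$ is out of range, so there is no residual global sign), and your suggested extra trimming via Ramanujan sums for other $j$ is an optional optimization the paper skips.
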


\begin{proof}
Equation~\eqref{lambda1} together with $|\lambda_1|=d+k$, obtained in the proof of Proposition~\ref{Prop. factorization}, implies
\begin{equation}\label{|d+k|}
d+k=\left|d-1+\mu(1)+\sum_{\substack{m\mid n \\ 1\leq m\leq n-1}}c_m\mu\left(\frac{n}{m}\right)\right|\,,
\end{equation}
where $k\in\mathbb{N}$ is related to $d$ and $n$ by the formula $n=k(2d+k)+1$.
We have $\mu(1)=1$, $\mu(\ell)\in\{1,-1,0\}$ for all $\ell\in\mathbb{N}$ and $\sum_{\substack{m\mid n \\ 1\leq m\leq n}}\left|\mu\left(\frac{n}{m}\right)\right|=2^r$, where $n=q_1^{\alpha_1}q_2^{\alpha_2}\cdots q_r^{\alpha_r}$ is the prime factorization of $n$. Let $s$ denote the number of proper divisors $m$ of $n$ such that $c_m\mu\left(\frac{n}{m}\right)=-1$. Then
$$
\mu(1)+\sum_{\substack{m\mid n \\ 1\leq m\leq n-1}}c_m\mu\left(\frac{n}{m}\right)=\sum_{\substack{m\mid n \\ 1\leq m\leq n}}\left|\mu\left(\frac{n}{m}\right)\right|-2s=2^r-2s\,.
$$
This allows us to rewrite equation~\eqref{|d+k|} in the form
\begin{equation}\label{d k s}
d+k=\left|d-1+2^r-2s\right|\,.
\end{equation}
With \eqref{d k s} in hand, we can proceed to disproving the existence of matrices $C$ for $n=120, d=5$ and $n=924,d=29$.

Let $n=120$, $d=5$. Using equation $n=k(2d+k)+1$ and the prime decomposition of $n=120$, we get $k=7$ and $r=3$ (see the proof of Proposition~\ref{Prop. k small}). Equation~\eqref{d k s} thus takes the form
$$
5+7=\left|5-1+2^3-2s\right|\,.
$$
Hence we have $s=0$ or $s=12$.
Let us start with the case $s=0$. By definition of $s$, equation $s=0$ means that $c_m=\mu\left(\frac{n}{m}\right)$ for every $m<n$ such that $m\mid n$ and $\mu\left(\frac{n}{m}\right)\neq0$. This allows us to find $c_m$ explicitly for each proper divisor $m$ of $n$ that satisfies $\mu\left(\frac{n}{m}\right)=\pm1$.
Knowing $c_m$ for an $m$ being a divisor of $n$, one can use \eqref{gcd} to find values $c_j$  for all $j$ such that $\gcd(j,n)=m$. In this way we obtain Table~\ref{Tab.120,5}. The last column shows all $j\leq\frac{n}{2}$ for which $\gcd(j,n)=m$. Values $c_j$ for $j>\frac{n}{2}$ can be found from the symmetry of $C$ using equation $c_j=c_{n-j}$.
\begin{table}[h]
\begin{center}
\begin{tabular}{|c|c|c|c|l|}
\hline
$m$ & $\frac{n}{m}$ & $\mu\left(\frac{n}{m}\right)$ & $c_m$ & $j\leq\frac{n}{2}\;:\;c_j=c_{n-j}=c_m$ \\
\hline
$1$ & $2^3\cdot3\cdot5$ & $0$ & $c_1$ & $1,7,11,13,17,19,23,29,31,37,41,43,47,49,53,59$ \\
$2$ & $2^2\cdot3\cdot5$ & $0$ & $c_2$ & $2,14,22,26,34,38,46,58$ \\
$3$ & $2^3\cdot5$ & $0$ & $c_3$ & $3,21,33,39,51,57$ \\
$4$ & $2\cdot3\cdot5$ & $-1$ & $-1$ & $4,28,44,52$ \\
$5$ & $2^3\cdot3$ & $0$ & $c_5$ & $5,35,55$ \\
$6$ & $2^2\cdot5$ & $0$ & $c_6$ & $6,42$ \\
$8$ & $3\cdot5$ & $1$ & $1$ & $8,56$ \\
$10$ & $2^2\cdot3$ & $0$ & $c_{10}$ & $10$ \\
$12$ & $2\cdot5$ & $1$ & $1$ & $12$ \\
$15$ & $2^3$ & $0$ & $c_{15}$ & $15$ \\
$20$ & $2\cdot3$ & $1$ & $1$ & $20$ \\
$24$ & $5$ & $-1$ & $-1$ & $24$ \\
$30$ & $2^2$ & $0$ & $c_{30}$ & $30$ \\
$40$ & $3$ & $-1$ & $-1$ & $40$ \\
$60$ & $2$ & $-1$ & $-1$ & $60$ \\
\hline
\end{tabular}
\end{center}
\caption{Values $c_j$ for $n=120$, $d=5$.}
\label{Tab.120,5}
\end{table}
Table~\ref{Tab.120,5} determines the matrix $C$ up to $8$ parameters $c_1,c_2,c_3,c_5,c_6,c_{10},c_{15},c_{30}$ that take values from $\{1,-1\}$. Our computer calculation for each possible $8$-tuple $(c_1,c_2,c_3,c_5,c_6,c_{10},c_{15},c_{30})$ confirmed that the rows of $C$ can never be mutually orthogonal, i.e., a $C$ corresponding to $s=0$ does not exist.

Let us proceed to the case $s=12$. Table~\ref{Tab.120,5} above shows that there are only $7$ proper divisors of $120$ such that $\mu\left(\frac{n}{m}\right)\neq0$, i.e., $s$ cannot exceed $7$. The case $s=12$ is thus impossible. To sum up, there exists no symmetric matrix $C$ satisfying \eqref{Conditions} for $(n,d)=(120,5)$.

Let $n=924$, $d=29$. Then $k=13$ and $r=4$, and equation~\eqref{d k s} takes the form
$$
29+13=\left|29-1+2^4-2s\right|\,;
$$
hence $s=1$ (the other solution, $s=43$, is impossible, because $924$ has only $23$ proper divisors).
Equation $s=1$ means that there is one single proper divisor $m_0$ of $n$ such that
$$
\mu\left(\frac{n}{m_0}\right)\neq0 \quad\text{and}\quad c_{m_0}=-\mu\left(\frac{n}{m_0}\right)\,,
$$
while all the other proper divisors of $n$ satisfy
$$
\left(m_0\neq m<n \quad\text{and}\quad \mu\left(\frac{n}{m}\right)\neq0\right) \quad\text{implies}\quad c_m=\mu\left(\frac{n}{m}\right)\,.
$$
Therefore, for each proper divisor of $n$ such that $\mu\left(\frac{n}{m}\right)\neq0$, we have $c_m=b_m\mu\left(\frac{n}{m}\right)$, where the values $b_m$ form a vector that is a permutation of $(-1,1,1,1,\ldots,1)$. Properties of the M\"obius function $\mu$ imply that the size of the vector is $2^r-1$, i.e., $15$.
Values $c_m$ are shown in Table~\ref{Tab.924,29}. They depend on parameters
$c_1,c_3,c_7,c_{11},c_{21},c_{33},c_{77},c_{231}\in\{1,-1\}$ and on the vector
$$
(b_2,b_4,b_6,b_{12},b_{14},b_{22},b_{28},b_{42},b_{44},b_{66},b_{84},b_{132},b_{154},b_{308},b_{462})\,,
$$
which is a permutation of $(-1,1,1,1,\ldots,1)$.
\begin{table}[h]
\begin{center}
\begin{tabular}{|c|c|c|c||c|c|c|c|}
\hline
$m$ & $\frac{n}{m}$ & $\mu\left(\frac{n}{m}\right)$ & $c_m$ & $m$ & $\frac{n}{m}$ & $\mu\left(\frac{n}{m}\right)$ & $c_m$ \\
\hline
$1$ & $2^2\cdot3\cdot7\cdot11$ & $0$ & $c_1$ & $33$ & $2^2\cdot7$ & $0$ & $c_{33}$ \\
$2$ & $2\cdot3\cdot7\cdot11$ & $1$ & $b_2$ & $42$ & $2\cdot11$ & $1$ & $b_{42}$ \\
$3$ & $2^2\cdot7\cdot11$ & $0$ & $c_3$ & $44$ & $3\cdot7$ & $1$ & $b_{44}$ \\
$4$ & $3\cdot7\cdot11$ & $-1$ & $-b_4$ & $66$ & $2\cdot7$ & $1$ & $b_{66}$ \\
$6$ & $2\cdot7\cdot11$ & $-1$ & $-b_6$ & $77$ & $2^2\cdot3$ & $0$ & $c_{77}$ \\
$7$ & $2^2\cdot3\cdot11$ & $0$ & $c_7$ & $84$ & $11$ & $-1$ & $-b_{84}$ \\
$11$ & $2^2\cdot3\cdot7$ & $0$ & $c_{11}$ & $132$ & $7$ & $-1$ & $-b_{132}$ \\
$12$ & $7\cdot11$ & $1$ & $b_{12}$ & $154$ & $2\cdot3$ & $1$ & $b_{154}$ \\
$14$ & $2\cdot3\cdot11$ & $-1$ & $-b_{14}$ & $231$ & $2^2$ & $0$ & $c_{231}$ \\
$21$ & $2^2\cdot11$ & $0$ & $c_{21}$ & $308$ & $3$ & $-1$ & $-b_{308}$ \\
$22$ & $2\cdot3\cdot7$ & $-1$ & $-b_{22}$ & $462$ & $2$ & $-1$ & $-b_{462}$ \\
$28$ & $3\cdot11$ & $1$ & $b_{28}$ & & & & \\
\hline
\end{tabular}
\end{center}
\caption{Values $c_m$ for $n=924$, $d=29$.}
\label{Tab.924,29}
\end{table}
Entries of $C$ that are not listed in Table~\ref{Tab.924,29} can be obtained using equation~\eqref{gcd}.
A computer calculation shows that for each choice of parameters $c_j$ and $b_j$, the rows of $C$ are not mutually orthogonal. Therefore, a symmetric matrix $C$ of order $924$ satisfying \eqref{Conditions} for $d=29$ does not exist.
\end{proof}

\begin{thm}\label{Thm: symmetric}
If a symmetric matrix $C$ satisfies \eqref{Conditions} for a given $d\geq0$, then $n=2d+2$.
\end{thm}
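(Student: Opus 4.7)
The plan is to dispatch the theorem by assembling the case analyses already developed in the paper; no new technique is needed beyond marshaling prior results.

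First, I would peel off the cases in which $d$ is not an odd positive integer. Proposition~\ref{Corol.d} guarantees that unless $2d\in\mathbb{N}_0$ no matrix $C$ exists at all; and if $d$ is a half-integer, the same proposition already gives $n=2(d+1)$. When $d$ is a non-negative even integer, Theorem~\ref{Thm: d even} does not even require symmetry and yields $n=2(d+1)$. This reduces the theorem to proving: if $d$ is an odd positive integer and $C$ is a symmetric solution of~\eqref{Conditions}, then $n=2(d+1)$.

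Next, for $d$ odd, Proposition~\ref{Corol.d} tells us $n$ is even, so Proposition~\ref{Prop.n,d} applies and yields a positive integer $k$ with $n=k(2d+k)+1$. The value $k=1$ gives exactly $n=2d+2=2(d+1)$, which is the desired conclusion, so it suffices to exclude every $k\geq 2$. I would split this range at the threshold $k=2^{7}$: Proposition~\ref{Prop. k large} disposes of all $k\geq 2^{7}$, while Proposition~\ref{Prop. k small} disposes of $1<k\leq 2^{7}$ except for the two residual pairs $(k,n)=(7,120)$ and $(k,n)=(13,924)$, corresponding to $(n,d)=(120,5)$ and $(n,d)=(924,29)$ respectively. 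These two leftover cases are precisely what Proposition~\ref{5,29} was proved to eliminate.

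There is really no genuine obstacle at this stage: the combinatorial and number-theoretic heavy lifting (the divisibility argument of Proposition~\ref{Prop.symmetric}, the M\"obius-function inequality of Proposition~\ref{Prop. factorization}, the parametrization from Lemma~\ref{Lemma bL}, and the explicit computer elimination of the two exceptional pairs) has already been carried out. The only thing to check carefully in writing up is that the translation $k=1\Leftrightarrow n=2(d+1)$ is correct and that the two threshold propositions together cover all $k\geq 2$ (which they do, since $2^{7}$ lies in both ranges). The proof itself is therefore a short assembly of cases.
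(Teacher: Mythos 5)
Your proposal is correct and follows essentially the same route as the paper: reduce to odd $d$ via Proposition~\ref{Corol.d} and Theorem~\ref{Thm: d even}, write $n=k(2d+k)+1$ via Proposition~\ref{Prop.n,d}, and kill $k>1$ by combining Propositions~\ref{Prop. k large}, \ref{Prop. k small} and \ref{5,29}. The only (harmless) deviation is that the paper dispatches $d=1$ separately by citing the known nonexistence of symmetric circulant Hadamard matrices of order $n>4$, whereas you fold $d=1$ into the general odd case --- which is legitimate, since those propositions are stated for arbitrary odd $d$ and the two exceptional pairs $(k,n)=(7,120)$ and $(13,924)$ do not occur when $d=1$ (there $n=(k+1)^2$).
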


\begin{proof}
If $d\notin\mathbb{N}_0$ or $d\in\mathbb{N}_0$ is even, then $n=2d+2$ according to results of Section~\ref{Section d<}, see Remark~\ref{Rem.conj}.
If $d=1$, the existence of symmetric circulant Hadamard matrices of orders $n>2d+2=4$ was disproved in papers~\cite{Jo,Br,MKW,CK}.
It remains to verify the statement for odd numbers $d>1$. According to Proposition~\ref{Prop1.d}, the order $n$ obeys $n=k(2d+k)+1$ for some $k\in\mathbb{N}$. However,
\begin{itemize}
\item the case $k>128$ is excluded by Proposition~\ref{Prop. k large};
\item the case $1<k\leq128$ is excluded by Proposition~\ref{Prop. k small}, except for $(k,n,d)=(7,120,5)$ and $(k,n,d)=(13,924,29)$;
\item the existence of symmetric matrices obeying \eqref{Conditions} for $(n,d)=(120,5)$ and $(n,d)=(924,29)$ is disproved by Proposition~\ref{5,29}.
\end{itemize}
To sum up, $k=1$; hence $n=2d+2$.
\end{proof}

We can also formulate a necessary condition for matrices $C$ that are not symmetric; the statement is a direct consequence of Propositions~\ref{Prop1.d} and \ref{Prop.even}:
\begin{prop}
If a matrix $C$ satisfying \eqref{Conditions} is not symmetric, then $d$ is odd and $n\equiv0\pmod4$.
\end{prop}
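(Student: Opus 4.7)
The proposition is essentially a bookkeeping corollary of the results already established in Sections~\ref{Preliminaries} and~\ref{Section d<}, so the plan is to take the contrapositive and rule out each combination that would force $d$ to be even (or non-integer), or $n$ to be odd (or congruent to $2\pmod 4$), by pointing to the appropriate previous statement. I do not anticipate any substantive obstacle; the only care needed is to make sure the case split is exhaustive.

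First I would handle the condition on $n$. If $n$ is odd, Proposition~\ref{Prop.d}(ii) says that the generator of $C$ must be $(d,-1,-1,\ldots,-1)$, which is palindromic, so $C$ is symmetric. If $n\equiv 2\pmod 4$, Proposition~\ref{Prop.sym} asserts directly that $C$ is symmetric. Therefore, under the hypothesis that $C$ is not symmetric, the only remaining possibility is $n\equiv 0\pmod 4$.

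Next I would handle the condition on $d$. By Proposition~\ref{Corol.d}, $2d$ must be an integer, and if $d$ is a half-integer then $n$ is odd, which has just been excluded; so $d$ is a non-negative integer. If $d$ is even (allowing $d=0$), then Theorem~\ref{Thm: d even} forces $n=2(d+1)$, but since $d+1$ is odd in that case, this gives $n\equiv 2\pmod 4$, again excluded. Hence $d$ must be odd, completing the argument.

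In short, the proof is just the implication chain
\[
C\text{ not symmetric}\;\Longrightarrow\;\bigl(n\not\equiv 2\!\!\pmod 4\bigr)\wedge\bigl(n\text{ even}\bigr)\;\Longrightarrow\;n\equiv 0\!\!\pmod 4,
\]
together with ruling out $d\in\tfrac12\mathbb{Z}\setminus\mathbb{Z}$ by Proposition~\ref{Corol.d} and $d$ even by Theorem~\ref{Thm: d even}. No new calculation is required beyond citing these four results.
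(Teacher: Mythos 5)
Your argument is correct. It differs from the paper's proof mainly in the order of deductions and in which earlier results carry the load. The paper first pins down the parity of $d$: it excludes half-integer and even $d$ by noting that either case forces $n=2(d+1)$ (Proposition~\ref{Corol.d}, Theorem~\ref{Thm: d even}), and then invokes the classification of Theorem~\ref{generators} to conclude that such a $C$ is symmetric; only afterwards does it read off $n\equiv0\pmod4$ from the parity equivalence in Proposition~\ref{Corol.d}. You instead settle the congruence on $n$ first, using Proposition~\ref{Prop.d}(ii) (odd $n$ forces the palindromic generator $(d,-1,\ldots,-1)$) and Proposition~\ref{Prop.sym} ($n\equiv2\pmod4$ forces symmetry), and then rule out even $d$ because it would push $n$ back into the excluded class $2\pmod4$. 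Your route has a small structural advantage: the paper's proof leans on Theorem~\ref{generators}, which is stated and proved only in the following section, whereas everything you cite precedes the proposition, so no forward reference is needed. (As a minor simplification, you could even skip Theorem~\ref{Thm: d even} in the last step: Proposition~\ref{Corol.d} already gives the equivalence ``$d$ odd $\Leftrightarrow$ $\tfrac{n}{2}$ even,'' so an even integer $d$ directly yields $n\equiv2\pmod4$.) Both proofs are short bookkeeping arguments and both are sound.
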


\section{Matrices $C$ satisfying $n=2d+2$}\label{Section d=}

According to Proposition~\ref{Prop1.d}, the smallest possible order of matrices $C$ obeying conditions~\eqref{Conditions} with a given value $d$ is $n=2d+2$, and
other results of Sections~\ref{Section d<} and \ref{Section: symmetric} indicate that it might be generally the only possible order.
Considering the prominence of matrices $C$ with the property $n=2d+2$, we devote this section to their full characterization. 
Note that the special case when $d$ is not an integer was already solved in Proposition~\ref{Prop1.d} and Remark~\ref{max d exists}.

We will divide the general solution into two steps.
In the first step we examine the situation when $c_{j}=1$ or $c_{n-j}=1$ for all $j=1,\ldots,n-1$ (Proposition~\ref{plus}). In the second step we proceed to the characterization of matrices $C$ such that $c_{m}=c_{n-m}=-1$ for some $m$ (Proposition~\ref{minus}).

\begin{prop}\label{plus}
Let $C$ satisfy \eqref{Conditions} for $n=2d+2$.
Let $c_{j}=1$ or $c_{n-j}=1$ for all $j=1,\ldots,n-1$. Then either
\begin{itemize}
\item $n=2$ and
$$
C=C_2:=\begin{pmatrix}
0 & 1 \\
1 & 0
\end{pmatrix}\,;
$$
\item or $n=4$ and
$$
C=C_{4a}:=\begin{pmatrix}
1 & 1 & 1 & -1 \\
-1 & 1 & 1 & 1 \\
1 & -1 & 1 & 1 \\
1 & 1 & -1 & 1
\end{pmatrix}
\qquad\text{or}\qquad
C=C_{4b}:=\begin{pmatrix}
1 & -1 & 1 & 1 \\
1 & 1 & -1 & 1 \\
1 & 1 & 1 & -1 \\
-1 & 1 & 1 & 1
\end{pmatrix}\,.
$$
\end{itemize}
\end{prop}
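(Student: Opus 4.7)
The plan is to extract a rigid structural form for the generator from the hypotheses, handle $n\in\{2,4\}$ directly, and rule out every $n\geq 6$ by a parity argument applied to two specific row orthogonality relations; throughout I set $m := n/2$.

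\emph{Structural reduction.} Under $n = 2(d+1)$, equation~\eqref{sum same} becomes $d + \sum_{j=1}^{n-1} c_j = \pm m$. The minus sign would force $c_j = -1$ for every $j \geq 1$, contradicting the hypothesis $c_j = 1 \vee c_{n-j} = 1$ whenever $n \geq 3$; hence $\sum_{j=1}^{n-1} c_j = 1$, so exactly $m - 1$ of $c_1, \ldots, c_{n-1}$ equal $-1$. The hypothesis then pins down $c_m = 1$ (the index $m$ is self-paired under $j \leftrightarrow n-j$) and forces $c_{n-j} = -c_j$ for $1 \leq j \leq m-1$. For $n \geq 4$, orthogonality of rows $0$ and $m$ gives $2 d c_m + \sum_{j \neq 0, m} c_j c_{j+m \bmod n} = 0$; with $c_m = 1$ and $n - 2 = 2d$ summands in $\{\pm 1\}$, every summand must equal $-1$. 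Hence $c_{j+m} = -c_j$ for $j \neq 0, m$, and combining with $c_{n-j} = -c_j$ yields the palindrome identity $c_j = c_{m-j}$ for $0 < j < m$. So the entire generator is determined by the palindromic sequence $(c_1, \ldots, c_{m-1}) \in \{\pm 1\}^{m-1}$. For $n = 2$ the hypothesis alone gives $c_1 = 1$ and $C = C_2$; for $n = 4$ the reduction forces $c_2 = 1$ and $c_3 = -c_1$ with $c_1 \in \{\pm 1\}$ free, yielding precisely $C_{4a}$ and $C_{4b}$.

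\emph{Impossibility for $n \geq 6$.} Writing $R(k) := \sum_{j=0}^{n-1} c_j c_{(j+k) \bmod n}$, orthogonality demands $R(k) = 0$ for all $k \neq 0$. Starting from $R(1) = 0$ and $c_{n-1} = -c_1$, splitting the sum at the midpoint and applying the antiperiodicity reduces the equation to $2 \sum_{j=1}^{m-2} c_j c_{j+1} = 0$ (the boundary remainder $c_{m-1} - c_1 = 0$ vanishes by the palindrome). Pairing indices via $j \leftrightarrow m-1-j$, each pair contributes $2 c_j c_{j+1}$ and a fixed point exists precisely when $m$ is odd, where it contributes $c_{(m-1)/2}^2 = 1$. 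Hence: if $m$ is odd (i.e.\ $n \equiv 2 \pmod 4$), the sum equals $1 + 2 \cdot (\text{integer})$, which cannot vanish; if $m$ is even, the sum equals $2 \sum_{j=1}^{m/2 - 1} c_j c_{j+1}$, and for $n \equiv 0 \pmod 8$ this inner sum has an odd number of $\pm 1$ summands and again cannot vanish. The residual case $n \equiv 4 \pmod 8$ with $n \geq 12$ is dispatched by applying the same mechanism to $R(2) = 0$: an analogous expansion collapses it to $\sum_{j=1}^{m/2 - 2} c_j c_{j+2} = 0$, and $m \equiv 2 \pmod 4$ forces $m/2 - 2$ to be odd, producing the final contradiction.

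\emph{Main obstacle.} The delicate case is $n \equiv 4 \pmod 8$, since there $R(1) = 0$ is parity-compatible with the palindrome; one must carry out a second, slightly more involved expansion of $R(2)$, carefully tracking both the boundary terms at $j \in \{0, n-2\}$ and the palindrome fixed point at $j = m/2 - 1$ to extract the odd-parity contradiction.
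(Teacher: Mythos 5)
Your proof is correct, and its skeleton ultimately matches the paper's: both arguments establish the pairing $c_{n-j}=-c_j$ for $1\le j\le \frac{n}{2}-1$, extract the antiperiodicity $c_{j+n/2}=-c_j$ from the orthogonality of rows $0$ and $\frac{n}{2}$, combine the two into the palindrome $c_j=c_{n/2-j}$, settle $n\in\{2,4\}$ explicitly, and then eliminate $n\ge 6$ via the parity obstruction carried by the two sums $\sum_{j=1}^{m/2-1}c_jc_{j+1}$ and $\sum_{j=1}^{m/2-2}c_jc_{j+2}$, whose numbers of $\pm1$ terms have opposite parity (your case split modulo $8$ just makes explicit which of the two sums is the guilty one). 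Two of your steps are genuinely different in execution, and arguably cleaner. First, you derive $c_{n-j}=-c_j$ by counting: equation~\eqref{sum same} forces exactly $\frac{n}{2}-1$ off-diagonal entries to equal $-1$, and the hypothesis caps each pair $\{j,n-j\}$ at one such entry, so every pair carries exactly one; the paper instead introduces the minimal $m$ with $c_m=c_{n-m}=1$ and needs the orthogonality of rows $0$ and $m$ to rule out $m<\frac{n}{2}$. Second, you compute the autocorrelations $R(1)$ and $R(2)$ of the generator directly, whereas the paper first repackages the antiperiodicity into a $2\times2$ block form built from an antisymmetric Toeplitz matrix $A$ and then takes scalar products of rows of $A$; the resulting conditions are identical, so this is a saving in bookkeeping rather than in substance. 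The only point you leave implicit is that $n$ must be even before you may set $m=n/2$: for odd $n$, Proposition~\ref{Prop.d}(ii) forces the generator $(d,-1,\dots,-1)$, which violates your hypothesis for $n\ge3$, so this is a one-line repair.
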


\begin{proof}
Proposition~\ref{Prop1.d} implies that $d\in\mathbb{N}_0$ and $n$ is even. Using the assumption $c_{j}=1$ or $c_{n-j}=1$ for the special choice $j=\frac{n}{2}$, we get
\begin{equation}\label{c n/2}
c_{\frac{n}{2}}=1\,.
\end{equation}
Then the orthogonality of the $0$th and the $\frac{n}{2}$th row of $C$ gives the condition
\begin{equation}\label{scalar product}
2\sum_{j=1}^{\frac{n}{2}-1}c_{j}c_{j+\frac{n}{2}}+n-2=0\,.
\end{equation}
Obviously, \eqref{scalar product} is satisfied only if each of the terms $c_jc_{j+\frac{n}{2}}\in\{1,-1\}$ is negative, i.e.,
\begin{equation}\label{antiperiod}
c_{j}=-c_{j+\frac{n}{2}} \quad\text{for all}\; j=1,\ldots,\frac{n}{2}-1\,.
\end{equation}
Now we use equation~\eqref{sum same}, which can be written for $n=2d+2$ in the form
\begin{equation}\label{paired}
d+(c_1+c_{n-1})+(c_2+c_{n-2})+\cdots+(c_{\frac{n}{2}-1}+c_{\frac{n}{2}+1})+1=\pm(d+1)\,.
\end{equation}
With regard to the assumption $c_{j}=1$ or $c_{n-j}=1$ for all $j$, equation~\eqref{paired} can be satisfied only if
\begin{equation}\label{antisym}
c_{j}=-c_{n-j} \quad\text{for all}\; j=1,\ldots,\frac{n}{2}-1\,.
\end{equation}
Equation~\eqref{c n/2} and conditions~\eqref{antiperiod} and~\eqref{antisym} imply that $C$ has the block form
\begin{equation}\label{C block A}
C=\left(\begin{array}{cc}
dI+A & I-A \\
I-A & dI+A
\end{array}\right)\,,
\end{equation}
where $A$ is a Toeplitz matrix with the $0$th row equal to $(0,c_{1},\ldots,c_{\frac{n}{2}-1})$ and with the $0$th column equal to $(0,-c_{1},\ldots,-c_{\frac{n}{2}-1})^T$. Therefore, $A=-A^T$. 
Equation~\eqref{C block A} together with the antisymmetry of $A$ implies
\begin{equation}\label{CCt}
CC^T=\left(\begin{array}{cc}
(d^2+1)I+2AA^T & 2dI-2AA^T \\
2dI-2AA^T & (d^2+1)I+2AA^T
\end{array}\right)\,.
\end{equation}
With regard to \eqref{CCt}, the condition $CC^T=(d^2+n-1)I$ is equivalent to
$AA^T=dI$.
Combining this fact with $A=-A^T$, we get
\begin{equation*}
(A-I)(A-I)^T=(d+1)I=\frac{n}{2}I\,,
\end{equation*}
i.e., $A-I$ is an Hadamard matrix.
Hence we get three possibilities:
\begin{itemize}
\item $\frac{n}{2}=1$ and $A-I=(-1)$. Substituting $A=(0)$ into \eqref{C block A}, we obtain the solution $C_2$.
\item $\frac{n}{2}=2$ and $A-I$ is either $\left(\begin{smallmatrix}-1&1\\-1&-1\end{smallmatrix}\right)$ or $\left(\begin{smallmatrix}-1&1\\-1&-1\end{smallmatrix}\right)$. When we substitute
$A=\left(\begin{smallmatrix}0&1\\-1&0\end{smallmatrix}\right)$ and $A=\left(\begin{smallmatrix}0&1\\-1&0\end{smallmatrix}\right)$
into~\eqref{C block A}, we obtain the solutions $C_{4a}$ and $C_{4b}$, respectively.
\item $\frac{n}{2}\geq4$ is a multiple of $4$.
\end{itemize}
In order to show that there is no solution for $\frac{n}{2}\geq4$,
let us use \eqref{antiperiod} and \eqref{antisym} to derive the relation
\begin{equation}\label{A satisfies lemma}
c_{j}=-c_{j+\frac{n}{2}}=-(-c_{n-(j+\frac{n}{2})})=c_{\frac{n}{2}-j} \qquad\text{for all}\; j=1,\ldots,\frac{n}{2}-1.
\end{equation}
Relation~\eqref{A satisfies lemma} implies that the $0$th row of $A-I$ takes the form
$$
\begin{array}{cccccccccccccl}
-1 & c_1 & c_2 & c_3 & \cdots & c_{\frac{n}{4}-1} & c_{\frac{n}{4}} & c_{\frac{n}{4}-1} & c_{\frac{n}{4}-2} & c_{\frac{n}{4}-3} & \cdots & c_2 & c_1 & , \\
\end{array}
$$
thus the $1$st and $2$nd row of $A-I$ read
$$
\begin{array}{cccccccccccccl}
-c_1 & -1 & c_1 & c_2 & \cdots & c_{\frac{n}{4}-2} & c_{\frac{n}{4}-1} & c_{\frac{n}{4}} & c_{\frac{n}{4}-1} & c_{\frac{n}{4}-2} & \cdots & c_3 & c_2 \\
-c_2 & -c_1 & -1 & c_1 & \cdots & c_{\frac{n}{4}-3} & c_{\frac{n}{4}-2} & c_{\frac{n}{4}-1} & c_{\frac{n}{4}} & c_{\frac{n}{4}-1} & \cdots & c_4 & c_3 & .
\end{array}
$$
The scalar product of the $0$th with the $1$st row is equal to
$$
2\left(\sum_{j=1}^{\frac{n}{4}-1}c_{j}c_{j+1}\right)\,.
$$
Similarly, the scalar product of the $0$th with the $2$nd row equals
$$
-c_1^2+c_{\frac{n}{4}-1}^2+2\left(\sum_{j=1}^{\frac{n}{4}-2}c_{j}c_{j+2}\right)=2\left(\sum_{j=1}^{\frac{n}{4}-2}c_{j}c_{j+2}\right)\,.
$$
Both scalar products should be zero. Hence we obtain the requirement
$$
\sum_{j=1}^{\frac{n}{4}-1}c_{j}c_{j+1}=0 \qquad\text{and}\qquad \sum_{j=1}^{\frac{n}{4}-2}c_{j}c_{j+2}=0\,.
$$
However, since the two sums have different parities, they cannot vanish at the same time.
\end{proof}

\begin{prop}\label{minus}
Let $C$ satisfy \eqref{Conditions} for $n=2d+2$.
Let there be an $m\in\{1,\ldots,n-1\}$ such that $c_{m}=c_{n-m}=-1$. Then $C$ is a block circulant matrix taking the form
\begin{equation}\label{C block}
C=\left(\begin{array}{ccccc}
B+\frac{n}{2}I & B & B & \cdots & B \\
B & B+\frac{n}{2}I & B & \cdots & B \\
B & B & B+\frac{n}{2}I &  & B \\
\vdots & \vdots &  & \ddots &  \\
B & B & B &  & B+\frac{n}{2}I
\end{array}\right)\,,
\end{equation}
where the block $B$ is either the $1\times1$ matrix $(-1)$ or $B$ is one of the matrices
\begin{equation}\label{B}
C_2-I\,,\quad C_{4a}-2I\,,\quad C_{4b}-2I
\end{equation}
for $C_2$, $C_{4a}$, $C_{4b}$ defined in Proposition~\ref{plus}.
\end{prop}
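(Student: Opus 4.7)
I first compute the scalar product of rows~$0$ and~$m$: it equals $-2d+\sum_{j\notin\{0,m\}} c_j\,c_{(j-m)\bmod n}$, which must vanish, so the sum equals $2d=n-2$. Since the sum has $n-2$ terms, each in $\{+1,-1\}$, every term must equal $+1$, yielding $c_j=c_{(j-m)\bmod n}$ for all $j\in\{1,\ldots,n-1\}\setminus\{m\}$. I then iterate this "shift-by-$(-m)$" relation on $\mathbb{Z}/n\mathbb{Z}$; letting $g=\gcd(m,n)$, I claim $c$ is constant on every coset of $\langle g\rangle$. Cosets other than $\langle g\rangle$ itself contain neither of the forbidden indices $0$ or $m$, so the iteration is unrestricted there. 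On $\langle g\rangle$ the cyclic "add $m$" orbit has two edges removed at $0$, but the remaining chain $c_m\to c_{2m}\to\cdots\to c_{n-m}$ pins all these values to $c_m=-1$, while $c_0=d$ remains separate. Setting $\gamma_0:=-1$ and $\gamma_i:=c_i$ for $i\in\{1,\ldots,g-1\}$, one therefore has $c_j=\gamma_{j\bmod g}$ for every $j\in\{1,\ldots,n-1\}$.

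Next, let $p=n/g$ and let $B$ be the $g\times g$ circulant matrix with first row $(\gamma_0,\gamma_1,\ldots,\gamma_{g-1})$. A direct entry-by-entry comparison using the periodicity above yields $C=J_p\otimes B+(n/2)\,I_n$, where $J_p$ denotes the $p\times p$ all-ones matrix. Expanding $CC^T=(d^2+n-1)I=(n/2)^2I$ via $J_p^2=pJ_p$ gives $J_p\otimes\bigl[pBB^T+(n/2)(B+B^T)\bigr]=0$, hence $BB^T+(g/2)(B+B^T)=0$. Completing the square, $D:=B+(g/2)I_g$ satisfies $DD^T=(g/2)^2I_g$. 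For $g\geq 2$, the matrix $D$ is circulant with first row $(g/2-1,\gamma_1,\ldots,\gamma_{g-1})$ and $d_D:=g/2-1\geq 0$, and since $(d_D)^2+g-1=(g/2)^2$, $D$ itself obeys conditions~\eqref{Conditions} at order $g=2(d_D+1)$. For $g=1$ the periodicity collapses to $c_j=-1$ for all $j\neq 0$, so $C=J_n\otimes(-1)+(n/2)\,I_n$ already exhibits the block form with $B=(-1)$.

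For $g\geq 2$, I classify $D$ by a dichotomy. If every $m'\in\{1,\ldots,g-1\}$ satisfies $\gamma_{m'}=1$ or $\gamma_{g-m'}=1$, then $D$ meets the hypothesis of Proposition~\ref{plus}, which forces $g\in\{2,4\}$ and $D\in\{C_2,C_{4a},C_{4b}\}$; hence $B=D-(g/2)I_g$ is one of the three matrices listed in~\eqref{B}. Otherwise there exists $m'$ with $\gamma_{m'}=\gamma_{g-m'}=-1$, and I invoke Proposition~\ref{minus} inductively on $D$ (valid because $g<n$) to obtain $D=J_{p'}\otimes B'+(g/2)I_g$ with $B'$ from the prescribed list. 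Substituting back and using $J_p\otimes J_{p'}=J_{pp'}$ collapses the double Kronecker product to $C=J_{pp'}\otimes B'+(n/2)\,I_n$, which is precisely the desired block form with block $B'$. The main obstacle is making rigorous the iterative extension of the shift relation across the special coset $\langle g\rangle$: because two edges incident with $0$ are removed from the cyclic "add $m$" graph on this coset, one must verify that the remaining path still reaches every nonzero element and pins it to $c_m=-1$; a secondary concern is the Kronecker-product bookkeeping in the recursive step, confirming that the re-decomposed block size and entries really match~\eqref{C block} and~\eqref{B}.
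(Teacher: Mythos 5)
Your proof is correct; the computational skeleton (the scalar product of rows $0$ and $m$ forcing every cross term to equal $+1$, the resulting periodicity of the generator, the block identity $DD^T=(g/2)^2I$ obtained by completing the square, and the reduction to Proposition~\ref{plus}) coincides with the paper's. Where you diverge is in how the period is extracted and how the classification is finished. The paper chooses $m$ \emph{minimal} with $c_m=c_{n-m}=-1$; minimality yields $m\mid n$ by a short chain argument (if $n\bmod m=k\neq0$ one gets $c_k=c_{n-k}=-1$ with $k<m$) and, crucially, hands over for free the hypothesis $c_j=1\vee c_{m-j}=1$ of Proposition~\ref{plus}, so no recursion is needed. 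You instead work with an arbitrary $m$, run the coset analysis of the ``add $m$'' graph to get period $g=\gcd(m,n)$, and then must split into the case where Proposition~\ref{plus} applies to $D=B+\frac{g}{2}I$ and the case where you recurse on $D$ (well-founded since $g\leq m<n$, with the Kronecker identity $J_p\otimes J_{p'}=J_{pp'}$ restoring the block form~\eqref{C block}). Both of the obstacles you flag do go through: on the special coset the two deleted edges are exactly the two edges incident with $0$, so the surviving path $m\to 2m\to\cdots\to n-m$ covers all of $\langle g\rangle\setminus\{0\}$ and pins those entries to $-1$ (degenerately so when $n/g=2$), and the re-assembled block in the recursive branch is indeed of the prescribed size and type. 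Your route buys nothing extra here, and the induction could be short-circuited by observing that $c_g=c_{n-g}=-1$ with $g\leq m$, so taking $m$ minimal from the start already forces $m=\gcd(m,n)\mid n$; but as written the argument is complete and valid.
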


\begin{proof}
Let $m$ be the minimal number with the property $c_{m}=c_{n-m}=-1$.
The $0$th and the $m$th row of $C$ take the form
$$
\begin{array}{ccccccccccl}
\left(\frac{n}{2}-1\right) & c_{1} & c_{2} & \cdots & c_{m-1} & -1 & c_{m+1} & \cdots & c_{n-2} & c_{n-1} \\
-1 & c_{n-m+1} & c_{n-m+2} & \cdots & c_{n-1} & \left(\frac{n}{2}-1\right) & c_{1} & \cdots & c_{n-m-2} & c_{n-m-1} & .
\end{array}
$$
Their scalar product must be zero; hence
\begin{equation}\label{scalarproduct}
\sum_{j=n-m+1}^{n-1}c_{j}c_{j+m-n}+\sum_{j=1}^{n-m-1}c_{j}c_{j+m}=n-2\,.
\end{equation}
Equation~\eqref{scalarproduct} is satisfied if and only if all the summands on the left hand side are equal to $1$, i.e.,
\begin{equation}\label{period}
c_{j}=c_{(j+m) \bmod n} \quad\text{for all}\; j=1,\ldots,n-1,\; j\neq n-m\,.
\end{equation}
Equation~\eqref{period} implies that $m$ divides $n$. Indeed, if $(n \bmod m)=k\neq0$, we would get
\begin{equation*}
-1=c_m=c_{2m}=\cdots=c_{n-k} \qquad\text{and}\qquad -1=c_{n-m}=c_{n-2m}=\cdots=c_{k}\,,
\end{equation*}
i.e., $c_k=c_{n-k}=-1$ for some $k<m$, which would contradict the definition of $m$.
By equation~\eqref{period}, the generator of $C$ takes the form
\begin{equation*}
\left(\frac{n}{2}-1,c_1,\ldots,c_{m-1},-1,c_1,\ldots,c_{m-1},-1,c_1,\ldots,c_{m-1},\ldots,-1,c_1,\ldots,c_{m-1}\right)\,.
\end{equation*}
Consequently, $C$ has the block form~\eqref{C block} for $B$ being a circulant matrix with generator $(-1,c_1,\ldots,c_{m-1})$. If $m=1$, we obtain immediately $B=(-1)$. If $m\geq2$, the minimality of $m$ trivially implies that
\begin{equation}\label{B c}
c_j=1\;\;\text{or}\;\; c_{m-j}=1 \quad\text{for all $j=1,\ldots,m-1$}.
\end{equation}
By assumption, matrix $C$ satisfies $CC^T=(d^2+n-1)I$ with $d=\frac{n}{2}-1$, i.e., $CC^T=\frac{n^2}{4}I$.
Hence we get the condition
$$
\frac{n}{m}BB^T+\frac{n}{2}(B+B^T)=0\,,
$$
which is equivalent to
\begin{equation}\label{B orthogonal}
\left(B+\frac{m}{2}I\right)\left(B+\frac{m}{2}I\right)^T=\frac{m^2}{4}I\,.
\end{equation}
To sum up, if $m\geq2$, then $B+\frac{m}{2}I$ is an $m\times m$ circulant matrix with generator $(-1+\frac{m}{2},c_1,\ldots,c_{m-1})$ and with properties~\eqref{B c} and \eqref{B orthogonal}. Since the matrix $B+\frac{m}{2}I$ satisfies all assumptions of Proposition~\ref{plus}, $B+\frac{m}{2}I$ equals $C_2$, $C_{4a}$ or $C_{4b}$. Hence we obtain the three possibilities listed in \eqref{B}.
\end{proof}

According to Propositions \ref{plus} and \ref{minus},
a matrix $C$ satisfies conditions~\eqref{Conditions} with $n=2d+2$ if and only if the generator of $C$ takes one of the forms below.
\begin{align*}
g_1=&\left(\frac{n}{2}-1,-1,-1,\ldots,-1\right) \quad\text{(for any $n\geq2$)}\,; \\
g_2=&\left(\frac{n}{2}-1,1,-1,1,-1,1,\ldots,-1,1\right) \quad\text{(for even $n$)}\,; \\
g_{4a}=&\left(\frac{n}{2}-1,1,1,-1,-1,1,1,-1,\ldots,-1,1,1,-1\right) \quad\text{(for $n$ being a multiple of $4$)}\,; \\
g_{4b}=&\left(\frac{n}{2}-1,-1,1,1,-1,-1,1,1,\ldots,-1,-1,1,1\right) \quad\text{(for $n$ being a multiple of $4$)}\,.
\end{align*}
In particular, a matrix $C$ with $n=2d+2$ exists for every $n\geq2$.
Note that $C$ may or may not be symmetric:
\begin{itemize}
\item If $C$ has generator $g_1$ or $g_2$, then $C^T=C$.
\item If $C_a$, $C_b$ are circulant matrices of the same order with generators $g_{4a}$ and $g_{4b}$, respectively, then $C_a^T=C_b\neq C_a$.
\end{itemize}

\begin{rem}
The case $d=1$ yields the only known circulant Hadamard matrices---of order $4$---in keeping with the circulant Hadamard matrix conjecture.
\end{rem}

If Conjecture~\ref{Conjecture} is true, then generators $g_1,g_2,g_{4a},g_{4b}$ listed above determine all the matrices $C$ satisfying \eqref{Conditions}, giving thus a complete solution to the problem.

\section*{Acknowledgements}
We thank J.~Seberry and R.~Craigen for useful comments on the topic and to the referee for many suggestions that helped us to improve the paper.
O.~T.\ appreciates the hospitality at Jagiellonian University in Krakow, where a part of this work was done,
and a support from the Czech Science Foundation (GA\v{C}R) within the project 17-01706S.
D.~G.\ acknowledges Grant FONDECYT Iniciaci\'{o}n number 11180474, Chile.


\begin{thebibliography}{00}

\bibitem{AAMS}
M.~H.~Ang, K.~T.~Arasu, S.~L.~Ma, Y.~Strassler, Study of proper circulant weighing matrices with weight $9$, \textit{Discrete Math.} \textbf{308} (2008) 2802--2809.

\bibitem{ALMNR}
K.~T.~Arasu, K.~H.~Leung, S.~L.~Ma, A.~Nabavi, D.~K.~Ray-Chaudhuri, Determination of all possible orders of weight $16$ circulant weighing matrices, \textit{Finite Fields Th. App.} \textbf{12} (2006) 498--538.

\bibitem{AS}
K.~T.~Arasu, J.~Seberry, On circulant weighing matrices, \textit{Australasian J. Combin.} \textbf{17} (1998) 21--37.

\bibitem{BJL99} T. Beth, D. Jungnickel, H. Lenz, \emph{Design Theory} (2nd edition), Cambridge University Press, 1999.

\bibitem{BM08} P. Borwein, M. Mossinghoff, Barker sequences and flat polynomials, in: J. McKee, C. Smyth (Eds.), \emph{Number Theory and Polynomials} (Bristol, U.K., 2006). London Math. Soc. Lecture Note Ser., vol.~352, Cambridge Univ. Press, 2008, pp. 71--88.

\bibitem{Br}
R.~A.~Brualdi, A note on multipliers of difference sets, \textit{J. Res. Natl. Bur. Stand.} \textbf{69B} (1965) 87--89.

\bibitem{CB67} C. Cook, M. Bernfeld, \emph{Radar signals: An Introduction to Theory and Application}, Academic Press, New York, 1967.

\bibitem{Cr}
R.~Craigen, Trace, symmetry and orthogonality, \textit{Canad. Math. Bull.} \textbf{37} (1994) 461--467.

\bibitem{CK}
R.~Craigen, H.~Kharaghani, On the nonexistence of Hermitian circulant complex Hadamard matrices, \textit{Australas. J. Combin.} \textbf{7} (1993) 225--227.

\bibitem{EH}
P.~Eades, R.M.~Hain, On circulant weighing matrices, \textit{Ars Combin.} \textbf{2} (1976) 265--284.

\bibitem{GT}
D.~Goyeneche, O.~Turek, Equiangular tight frames and unistochastic matrices, \emph{J.\ Phys.\ A: Math.\ Theor.} \textbf{50} (2017) 245304.

\bibitem{HW}
G.~H.~Hardy, E.~M.~Wright, \textit{An Introduction to the Theory of Numbers} (5th ed.), Oxford University Press, Oxford, 1980.

\bibitem{H52} D. Huffman, A method for the construction of minimum redundancy codes, \emph{Proc. of the IRE}, vol.~40 (1952) pp. 1098--1101.

\bibitem{H62} D. Huffman, The generation of impulse-equivalent pulse trains, \emph{IRE Trans. on Information Theory}, vol.~8 (1962) 10--16.

\bibitem{Jo}
E.~C.~Johnsen, The inverse multiplier for abelian group difference sets, \textit{Canad. J. Math.} \textbf{16} (1964) 787--796.

\bibitem{KO}
P.~Kurasov, R.~Ogik, On equi-transmitting matrices, \textit{Research Reports in Mathematics}, no. 1 (2014), Stockholm University.

\bibitem{La}
C.~W.~H.~Lam, Non-skew symmetric orthogonal matrices with constant diagonals, \textit{Discrete Math.} \textbf{43} (1983) 65--78.

\bibitem{MKW}
J.~H.~McKay, S.S.-S.~Wang, On a theorem of Brualdi and Newman, \textit{Linear Algebra Appl.} \textbf{92} (1987) 39--43.

\bibitem{Ry}
H.~J.~Ryser, \textit{Combinatorial mathematics}, Willey, New York, 1963.

\bibitem{Sch1}
B.~Schmidt, Cyclotomic integers and finite geometry, \textit{J. Am. Math. Soc.} \textbf{12} (1999) 929--952.

\bibitem{Sch2}
B.~Schmidt, Towards Ryser's conjecture, in: C. Casacuberta et al., eds., \textit{Proc. of 3rd European Congress on Mathematics}, Progress in Mathematics, vol.~201, Birkh\"auser 2001, pp. 533--541.

\bibitem{SL}
J.~Seberry, C.~W.~H.~Lam, On orthogonal matrices with constant diagonal, \textit{Linear Algebra Appl.} \textbf{46} (1982) 117--129.

\bibitem{SM}
R.~G.~Stanton, R.~C.~Mullin, On the nonexistence of a class of circulant balanced weighing matrices, \textit{SIAM J. Appl. Math.} \textbf{30} (1976) 98--102.

\bibitem{TC}
O.~Turek, T.~Cheon, Hermitian unitary matrices with modular permutation symmetry, \textit{Linear Algebra Appl.} \textbf{469} (2015) 569--593.

\bibitem{Tu}
R.~Turyn, Character sums and difference sets, \textit{Pacific J. Math.} \textbf{15} (1965) 319--346.

\end{thebibliography}
\end{document}